\documentclass[a4paper,fleqn]{cas-sc}

\usepackage[numbers,sort&compress]{natbib}

\usepackage{amsmath}
\usepackage{amssymb}
\usepackage{amsfonts}
\usepackage{amsthm}
\usepackage{lyu}
\usepackage{xcolor}
\usepackage{booktabs}
\usepackage{algorithm}
\usepackage{algorithmic}
\usepackage{placeins}

\newtheorem{theorem}{Theorem}[section]

\newtheorem{lemma}[theorem]{Lemma}
\theoremstyle{definition}

\newtheorem{proposition}[theorem]{Proposition}
\newtheorem{remark}[theorem]{Remark}

\begin{document}
\let\WriteBookmarks\relax
\def\floatpagepagefraction{1}
\def\textpagefraction{.001}

\shorttitle{High-dimensional enhanced sampling via path-dependent McKean--Vlasov dynamics}
\shortauthors{L.~Lyu, S.~Guo and H.~Lei}

\title[mode=title]{High-Dimensional Enhanced Sampling via Regularized Path-Dependent McKean--Vlasov Dynamics using Tensor Density Approximation}

\author[1,2,3]{Liyao Lyu}[orcid=0000-0003-2994-1464]
\ead{lyuliyao@math.ucla.edu}

\author[4]{Siyu Guo}
\ead{guosiyu1@msu.edu}

\author[4,5]{Huan Lei}[orcid=0000-0002-9730-6481]
\cormark[1]
\ead{leihuan@msu.edu}


\affiliation[1]{organization={School of Artificial Intelligence and Data Science,
                              University of Science and Technology of China},
                city={Hefei}, state={Anhui}, country={China}}

\affiliation[2]{organization={Suzhou Institute for Advanced Research,
                              University of Science and Technology of China},
                city={Suzhou}, state={Jiangsu}, country={China}}

\affiliation[3]{organization={Suzhou Big Data \& AI Research and Engineering Center},
                city={Suzhou}, state={Jiangsu}, country={China}}

\affiliation[4]{organization={Department of Computational Mathematics, Science \& Engineering,
                              Michigan State University},
                city={East Lansing}, postcode={48824}, state={MI}, country={USA}}

\affiliation[5]{organization={Department of Statistics \& Probability,
                              Michigan State University},
                city={East Lansing}, postcode={48824}, state={MI}, country={USA}}

\begin{abstract}
Sampling from high-dimensional Gibbs measures poses a challenge when the energy landscape consists of multiple metastable states. 
Enhanced-sampling methods mitigate this difficulty by introducing adaptive biasing potentials to facilitate the exploration along prescribed collective variables (CVs), but their scalability is often limited by the dimension of the CV space. 
Motivated by the Wasserstein-gradient-flow interpretation of adaptive biasing, we propose a regularized path-dependent McKean--Vlasov formulation for high-dimensional enhanced sampling.   The formulation replaces the variational regularization of the Wasserstein functional by a direct regularization of the CV marginal density in the McKean--Vlasov drift, avoiding the outer convolution over the CV domain. Furthermore, 
it replaces the instantaneous law by a weighted path-history measure to improve statistical stability in the small-replica regime.
We establish well-posedness of the resulting regularized and path-dependent stochastic dynamics under suitable assumptions.  
For numerical realization, the history-averaged CV marginal density is approximated using an optimization-free functional hierarchical tensor representation, leading to a scalable density-based adaptive biasing scheme. 
Numerical experiments on benchmark potentials and molecular systems demonstrate the effectiveness of the proposed method for sampling problems with CV dimensions up to \(64\).

\end{abstract}
\begin{keywords}
enhanced sampling \sep
McKean--Vlasov dynamics \sep
functional hierarchical tensor \sep
collective variables \sep
Wasserstein gradient flow \sep
molecular dynamics
\end{keywords}

\maketitle

\section{Introduction}

Sampling from high-dimensional Gibbs measures \(
\pi({\rm d} \bx) \propto \exp\left(-\beta\,U(\bx)\right) {\rm d} \bx \) associated with complex energy landscapes $U(\bx)$ is a central problem in computational statistical mechanics. When $U(\bx)$ contains multiple well-separated local minima, the direct Langevin dynamics often exhibits rare barrier-crossing events and fails to explore configuration space within a feasible simulation time. Enhanced-sampling methods address this difficulty by introducing a biasing drift term, often in a prescribed space of collective variables (CVs), to promote exploration of metastable regions while permitting unbiased reconstruction of $\pi$-averages by reweighting. Such ideas have broad applications in molecular simulation \cite{shaw2010atomic}, Bayesian inference \cite{welling2011bayesian}, and the inverse problem \cite{stuart2010inverse}. However, the practical success of this strategy depends on the ability to construct and update the bias efficiently, and this task becomes increasingly difficult as the number of CVs grows. Developing computationally tractable adaptive-biasing formulations for moderate- to high-dimensional CV spaces remains an essential challenge.

Given a prescribed CV map \(\xi:\mathbb{R}^{d}\to\mathbb{R}^m\) with $m\ll d$, recent work \cite{lelievre2025convergence} by Leli\`evre, Lin, and Monmarch\'e provided a useful Wasserstein-gradient-flow perspective on adaptive biasing potential methods. 
In this formulation, the evolving probability law is induced by a modified free-energy functional with an additional entropy term for the CV marginal. The adaptive biasing drift takes a general form 
$\nabla\xi(\bx)^{\!\top}\nabla\log\rho_{\xi,t}(\xi(\bx))$ where \(\rho_{\xi,t}\)  denotes the instantaneous CV marginal density. Thus, the biased term 
penalizes the high-probability regions through the score function of the CVs. 
This perspective provides a unifying mathematical viewpoint for potential-of-mean-force (PMF)-based adaptive biasing methods and is closely related to classical enhanced-sampling strategies, including metadynamics ~\cite{laio2002escaping,barducci2008well}, umbrella sampling ~\cite{torrie1977nonphysical}, adaptive biasing force methods ~\cite{Darve_Pohorille_JCP_2001,darve2008adaptive} and their long-time convergence analysis ~\cite{lelievre2008long,chipot2011enhanced}, temperature-accelerated ~\cite{maragliano2006temperature,rosso2002use} and variationally enhanced sampling \cite{valsson2014variational,bonati2019neural}. Meanwhile, it also identifies the estimation and differentiation of an evolving CV marginal density as a central computational task in adaptive enhanced sampling.

Despite its conceptual appeal, the direct Wasserstein gradient flow formulation poses numerical challenges for high-dimensional CV spaces. 
The first challenge is that the biasing drift term depends on the score \(\nabla\log\rho_{\xi,t}\) of the evolving CV marginal density. 
In practice, the density estimates obtained from finite samples can exhibit large oscillations or loss of positivity in poorly sampled regions. Ref. ~\cite{lelievre2025convergence} proposed a regularized variational formulation to address the issue. The resulting gradient flow involves a nested convolution term
$\psi_\delta \ast \log(\psi_\delta \ast \rho_{\xi,t})$, 
where the inner convolution is taken against the empirical measure but the outer convolution acts over the CV domain. The latter one typically requires a grid representation over the full CV space and becomes computationally intractable in high-dimensional problems. 
A second difficulty is the finite-ensemble approximation error: the drift term depends on the instantaneous law of the process, which, in practice, is approximated from a finite number $M$ of interacting walkers on the fly. The mean-field description is justified only in the idealized limit $M\to \infty$, whereas realistic molecular simulations can typically only afford a small number (e.g., $\mathcal{O}(10)$) of walkers, so the instantaneous empirical marginal is subject to substantial fluctuations, and thus may not provide a reliable approximation of \(\rho_{\xi,t}\). 
These two limitations motivate a stochastic reformulation of the sampling process that enables more efficient construction of the drift term while remaining well-posed and numerically robust for finite-replica simulations.

In this study, we address the aforementioned difficulties by reformulating the stochastic dynamics of the adaptive biased sampling process. 
First, instead of regularizing the free-energy functional, we introduce a direct regularization of the CV marginal that defines the McKean--Vlasov drift. 
This construction yields a smooth and positivity-preserving CV marginal and therefore a well-defined score function in poorly sampled regions while avoiding the second convolution over the CV space. 
This modification sacrifices the exact Wasserstein gradient-flow structure, but leads to a drift term that enables substantially simpler numerical evaluation in the high-dimensional CV space. 
Second, to reduce the finite-ensemble approximation error due to the limited number of walkers, we replace the instantaneous law in the McKean--Vlasov drift by a weighted history measure accumulated along the trajectories. 
The reformulation leads to a path-distribution-dependent stochastic process ~\cite{wang2018distribution}  where the instantaneous biasing drift further depends on the history of the sampling process. 
Crucially, we show that, under suitable regularity assumptions, both the regularized McKean--Vlasov dynamics and its path-dependent generalization admit unique strong solutions. 
A related history-dependent adaptive biased force method has been analyzed by Bena\"im, Br\'ehier, and Monmarch\'e~\cite{benaim2020abf_self_interacting} on the well-posedness and long-time consistency, where the trajectory history is used to estimate the conditional mean force.
In contrast, the present formulation uses history measure to directly construct a regularized CV marginal density and the corresponding score-driven biasing drift.
%
With the new formulation, the remaining computational task reduces to the efficient approximation of the regularized CV marginal density in the high-dimensional CV space.

Several approaches have been explored for related high-dimensional bias or density approximation problems. Neural-network parametrizations \cite{galvelis2017neural,zhang2018reinforced,schneider2017stochastic,wang2022data} provide flexible representations of the bias, but typically require nonconvex training and may be difficult to control in sparsely sampled regions. Tensor-based methods offer a complementary approach by exploiting various low-rank structures such as Canonical Polyadic ~\cite{carroll1970analysis}, Tucker~\cite{tucker1966some}, tensor train ~\cite{oseledets2011tensor}, and hierarchical Tucker ~\cite{hackbusch2019hierarchical}, which may admit efficient storage and numerical operations linearly scaled with the dimension when such low-rank structure exists. Tensor-product and tensor-network representations have been used in probability density estimation \cite{Novikov_Panov_PMLR_2021, YoonHaeng_Jeremy_ACHA_2023,tang_generative_2023,tang2024solving},  high-dimensional partial differential equation solvers \cite{
Khoromskij_Schwab_SISC_2011,bachmayr_tensor_2016,Einkemmer_Joseph_JCP_2021, Tang_Collis_SISC_2025, wang2025tensor}, and rare-event simulation \cite{Chen_Hoskins_JCP_2023}. In this work, we use a functional hierarchical tensor (FHT) approximation \cite{tang2024solving} for the CV marginal density, where the density function is represented by a set of tensor-product bases and the coefficients are obtained in a low-rank hierarchical format using sketching. 
While Ref. \cite{tang2024solving} uses FHT density estimation to reconstruct solutions of high-dimensional Fokker--Planck equations from samples of the associated stochastic differential equations, 
the present work uses the FHT approximation in a feed-back loop where the accumulated trajectory data are used to estimate the CV marginal, construct the biasing drift, and update the path-dependent McKean--Vlasov dynamics to drive the exploration of the phase space.

The remainder of the paper develops this regularized path-dependent McKean--Vlasov framework and the computational method. 
We first explain why path-dependent dynamics are needed for adaptive biasing in the small-replica regime, providing the physical motivation in Section~\ref{subsec:prelim}. 
We then introduce the regularized and path-dependent formulations and establish their well-posedness in Sections~\ref{subsec:regularized} and~\ref{subsec:path}. 
The FHT-based density approximation and adaptive sampling algorithm are described in Section~\ref{subsec:density_estimation} with a summary of  the adaptive sampling algorithm in Section~\ref{subsec:adpative_sampling}. 
In Section~\ref{sec:result}, we present applications to molecular systems with CV dimensions up to \(64\), including both periodic and non-periodic CV settings. To the best of our knowledge, this CV dimensionality is beyond the regime typically addressed by adaptive enhanced sampling methods for molecular dynamics. 
Section~\ref{sec:discussion} concludes with a brief summary and discussion.

 \section{Method}\label{sec:method}

\subsection{Preliminaries and Wasserstein-inspired adaptive biasing}
\label{subsec:prelim}

We consider a molecular system with configuration variables $\bx \in \mathbb{R}^{d}$ evolving under a potential energy function $U : \mathbb{R}^{d} \to \mathbb{R}$. 
At inverse temperature \(\beta=(k_B T)^{-1}\), the target equilibrium distribution is the Gibbs measure
\begin{equation}
\pi(\rm d\bx)=Z^{-1}\exp\!\left(-\beta U(\bx)\right)\,\rm d\bx,
\label{eq:boltzmann}    
\end{equation}
where \(Z\) is the partition function. 
A standard stochastic dynamics preserving \(\pi\) as its invariant measure is the overdamped Langevin dynamics
\[
\rm d\bX_t
=
-\frac{1}{\gamma}\nabla U(\bX_t)\, \rm dt
+
\sqrt{\frac{2}{\beta\gamma}}\, \rm d\bW_t ,
\]
where \(\gamma>0\) is a friction coefficient and \(\bW_t\) is a standard \(d\)-dimensional Brownian motion. Although molecular dynamics (MD) simulations are often performed with underdamped Langevin dynamics, we formulate the stochastic analysis below in the overdamped setting for clarity. When \(U\) has multiple well-separated local minima, the dynamics is metastable and may fail to explore the full configuration space over feasible simulation times.

To overcome this challenge, a variety of enhanced-sampling methods have been developed. The standard approach is to introduce a smooth CV mapping
\[
    \xi : \mathbb{R}^{d} \to \mathbb{R}^m, \qquad m \ll d,
\]
designed to distinguish the system's metastable regions. Throughout this work, we assume that $\xi$ is a submersion, i.e., $\operatorname{rank} \nabla\xi(\bx) = m$ for all $\bx \in \mathbb{R}^{d}$.
While typical choices often rely on the physical insights (e.g.,  
backbone dihedral angles and interatomic distances), various data-driven learning of CVs becomes a promising yet open problem. In this study, we take $\xi$ as given and focus on the sampling problem in the prescribed CV space.

For a probability measure \(\mu\in\mathcal P_2(\mathbb R^d)\), we denote its pushforward under \(\xi\) by $\mu_\xi=\xi_\#\mu$. When $\mu$ and $\mu_\xi$ admit a Lebesgue density, we use $\rho_{\mu}(\bx)$ and $\rho_{\xi}(\bz)$ to denote the density functions. 
Under the submersion assumption, the coarea formula gives the CV marginal density in the form
\[
\rho_{\xi}(\bz)
=
\int_{\xi^{-1}(\bz)}
\frac{\rho_\mu(\bx)}{J_\xi(\bx)}\,d\sigma_{\bz}(\bx),
\qquad
J_\xi(\bx)
=
\sqrt{\det\!\bigl(\nabla\xi(\bx)\nabla\xi(\bx)^{\top}\bigr)},
\]
where \(d\sigma_{\bz}\) denotes the induced surface measure on the level set \(\xi^{-1}(\bz)\). 

Following~\cite{lelievre2025convergence}, many enhanced-sampling methods can be cast as Wasserstein gradient flows of a modified free-energy functional $\mathcal F_\alpha(\mu)$ by introducing an entropy penalty on the CV marginal, i.e., 
\begin{equation}
\mathcal F_\alpha(\mu)
=
\int_{\mathbb R^d} U(\bx)\,\mu(d\bx)
+
\frac{1}{\beta}
\int_{\mathbb R^d}
\rho_\mu(\bx)\log\rho_\mu(\bx)\, \intd\bx
+
\frac{\alpha}{\beta}
\int_{\mathbb R^m}
\rho_{\xi}(\bz)\log\rho_{\xi}(\bz)\,\intd\bz, 
\nonumber
\end{equation}
with \(\alpha\ge 0\) controls the bias strength. 
The first two terms represent the standard free energy whose unique minimizer is the Boltzmann--Gibbs distribution~\eqref{eq:boltzmann}.
The third CV-entropy term penalizes concentration of the law in CV space and corresponds to a biasing drift that drives the system toward a \emph{flatter} distribution in the CV space, effectively lowering the free-energy barriers separating metastable basins.
By Proposition~1 of~\cite{lelievre2025convergence}, the functional $\mathcal{F}_\alpha$ admits a \emph{unique minimizer}
\[
\rho_\alpha^*(\bx)
\propto
\exp\!\left[
-\beta\left(
U(\bx)-\frac{\alpha}{\alpha+1}A(\xi(\bx))
\right)
\right]
\]
where $A(\bz) =-\beta^{-1}\log \rho_{\pi,\xi}(\bz)$ is the free energy and $\rho_{\pi,\xi}$ is the marginal density of the equilibrium Gibbs distribution.

Formally, the Wasserstein gradient flow associated with \(\mathcal F_\alpha\) yields a nonlinear McKean--Vlasov dynamics. 
The corresponding biased overdamped dynamics takes the form
\begin{equation}
{\rm d}\bX_t
=
-\frac{1}{\gamma}
\left[
\nabla U(\bX_t)
+
\frac{\alpha}{\beta}
\nabla\xi(\bX_t)^{\top}
\nabla_{\bz}\log\rho_{\xi,t}\bigl(\xi(\bX_t)\bigr)
\right]{\rm d}t
+
\sqrt{\frac{2}{\beta\gamma}}\, {\rm d}\bW_t .
\label{eq:MV_ideal}
\end{equation}
The dynamics~\eqref{eq:MV_ideal} is only formal since the score \(\nabla_{\bz}\log\rho_{\xi,t}\) might fail to be smooth when the CV marginal is estimated from finite samples. 
One possible way to address this difficulty is to introduce a regularized free-energy functional.  
The corresponding regularized free-energy functional is
\begin{equation}\label{eq:Falpha_delta}
    \mathcal F_\alpha^\delta(\mu)
    =
    \int_{\mathbb R^d} U(\bx)\,\mu(d\bx)
    +
    \frac{1}{\beta}
    \int_{\mathbb R^d}
    \rho_\mu(\bx)\log\rho_\mu(\bx)\,d\bx
    +
    \frac{\alpha}{\beta}
    \int_{\mathbb R^m}
   (\varphi^\delta*\mu_\xi) \log (\varphi^\delta*\mu_\xi) \,d\bz, 
\end{equation}
where \(\varphi^\delta\) is a smooth mollifier. The corresponding regularized McKean--Vlasov SDE
\begin{equation}\label{eq:MV_delta}
    \intd \bX_t = -\frac{1}{\gamma}\nabla\!\left(U(\bX_t) + \frac{\alpha}{\beta}\,\bigl(\varphi^\delta * \log(\varphi^\delta * \mu_{\xi,t})\bigr)\!\bigl(\xi(\bX_t)\bigr) \right)\intd t + \sqrt{\frac{2}{\beta\gamma}}\,\intd \bW_t.
\end{equation}
The outer convolution regularizes the variational derivative in CV space and yields a smooth biasing drift under suitable assumptions on the mollifier. Here the inner convolution smooths the CV marginal, while the outer convolution regularizes the variational derivative in CV space and yields a smooth biasing drift under suitable assumptions on the mollifier and the SDE is well-posed.

Equation~\eqref{eq:MV_delta} can be approximated by a standard interacting particle system with \(M\) particles (i.e., replicas of MD simulations). 
However, two numerical challenges remain in high-dimensional CV spaces. 
First, the drift in~\eqref{eq:MV_delta} contains two nested convolutions. 
While the inner convolution can be evaluated directly against the empirical particles, the outer convolution acts on a general function over the CV domain and typically requires a grid- or basis-based representation over the full CV space, which becomes prohibitively expensive as \(m\) grows. 
Second, the drift depends on the instantaneous law \(\mu_t={\rm Law}(\bX_t)\), or equivalently on the instantaneous CV marginal density \(\rho_{\xi,t}\). 
The interacting-particle approximation approaches the mean-field limit only as \(M\to\infty\), whereas realistic MD simulations typically use only a small number of replicas, often on the order of \(\mathcal O(10)\). 
The resulting instantaneous empirical marginal is therefore too noisy to provide a reliable high-dimensional density estimate.

These two challenges motivate the reformulations introduced below. 
In Section~\ref{subsec:regularized}, we introduce a directly regularized drift that avoids the outer convolution while retaining a well-defined score field. 
In Section~\ref{subsec:path}, we replace the instantaneous law by a weighted history measure accumulated along the trajectories. 
Finally, Section~\ref{subsec:density_estimation} describes the FHT-based density approximation used to estimate the regularized CV marginal from accumulated trajectory data and Section. \ref{subsec:adpative_sampling} summarizes the sampling algorithm.

\subsection{Directly regularized McKean--Vlasov dynamics}
\label{subsec:regularized}

We now introduce a directly regularized McKean--Vlasov dynamics that avoids the outer convolution and meanwhile ensures the well-posedness. 
The key idea is to regularize the CV marginal density that enters the biasing drift rather than the free-energy functional itself. The regularization proceeds in two steps. First, we introduce a mollified CV marginal density $(\varphi^\delta * \mu_\xi)(\bz)$ that is everywhere smooth and strictly positive on the support of $\varphi^\delta * \mu_\xi$. However, the obtained density could exhibit pronounced oscillation and yield an unstable score function in the poorly sampled region. Therefore, we introduce a second regularization
\[
K_{\tau,\epsilon}(r)
=
\epsilon+\tau\,{\rm Softplus}(r/\tau),
\qquad
\epsilon,\tau>0,
\]
which ensures a uniform positive lower bound $K_{\epsilon,\tau}(\phi_\delta * \mu_\xi)$.  The regularized dynamics then reads
\begin{equation}\label{equ:regularized_MV_x}
    \intd \bX_t = -\frac{1}{\gamma}\nabla\!\left(U(\bX_t) + \frac{\alpha}{\beta}\log K_{\tau,\epsilon}\!\bigl((\varphi^\delta * \mu_{\xi,t})(\xi(\bX_t))\bigr)\right) \intd t + \sqrt{\frac{2}{\beta\gamma}}\,\intd \bW_t.
\end{equation}
Compared with the variationally regularized dynamics~\eqref{eq:MV_delta}, the drift in~\eqref{equ:regularized_MV_x} contains no outer convolution over the CV domain. Moreover, the mollification and the softplus regularization ensures that the drift term is globally Lipschitz (in the $W_2$ sense) which is discussed below in detail. 
On the other hand, we note that the dynamics \eqref{equ:regularized_MV_x} is no longer the exact Wasserstein gradient flow of the regularized free-energy functional~\eqref{eq:Falpha_delta}. 
Instead, it should be viewed as a directly regularized stochastic dynamics that trade the variational structure for a simple drift term without the nested convolution.

Although the gradient-flow structure is lost, the resulting dynamics remains well posed, as stated in the following theorem.

\begin{theorem}\label{thm:mk_well}
Assume that $\nabla U(\bx)$ is globally Lipschitz, that $\xi:\mathbb R^{d}\to\mathbb R^m$ belongs to $C^2$ with bounded $\nabla \xi$ and globally Lipschitz $\nabla^2 \xi$, and that $\varphi^\delta\in C^2(\mathbb R^m)$ with bounded derivatives. 
Then, for any deterministic initial condition \(\bX_0=\bx\), the McKean--Vlasov SDE~\eqref{equ:regularized_MV_x} admits a unique strong solution.
\end{theorem}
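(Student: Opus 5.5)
The plan is to apply the standard well-posedness theory for McKean--Vlasov SDEs whose drift $b(\bx,\mu)$ is globally Lipschitz jointly in $(\bx,\mu)\in\mathbb R^d\times\mathcal P_2(\mathbb R^d)$, with the $W_2$ metric on the measure variable. We then run the usual fixed-point argument. Write the drift as
\[
b(\bx,\mu)=-\frac{1}{\gamma}\Bigl(\nabla U(\bx)+\frac{\alpha}{\beta}\,\frac{K_{\tau,\epsilon}'(g_\mu(\xi(\bx)))}{K_{\tau,\epsilon}(g_\mu(\xi(\bx)))}\,\nabla\xi(\bx)^{\top}\nabla g_\mu(\xi(\bx))\Bigr),
\qquad g_\mu:=\varphi^\delta*\mu_\xi .
\]
Here $g_\mu(\bz)=\int\varphi^\delta(\bz-\xi(\by))\,\mu(d\by)$ and $\nabla g_\mu(\bz)=\int\nabla\varphi^\delta(\bz-\xi(\by))\,\mu(d\by)$.

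The first step collects elementary bounds on the regularizing functions. We have $K=K_{\tau,\epsilon}\ge\epsilon$, and $K'=\sigma(r/\tau)\in(0,1)$, where $\sigma$ is the logistic function. Also $K''=\tau^{-1}\sigma(1-\sigma)$ is bounded by $1/(4\tau)$. Hence $h:=K'/K$ is bounded by $1/\epsilon$ and globally Lipschitz with constant at most $1/(4\tau\epsilon)+1/\epsilon^2$.

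Next, $g_\mu$ and $\nabla g_\mu$ are bounded uniformly in $\mu$, by $\|\varphi^\delta\|_\infty$ and $\|\nabla\varphi^\delta\|_\infty$. They are Lipschitz in $\bz$, with constants $\|\nabla\varphi^\delta\|_\infty$ and $\|\nabla^2\varphi^\delta\|_\infty$.

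For the measure variable, take any coupling $\pi$ of $(\mu,\nu)$. Then
\[
|g_\mu(\bz)-g_\nu(\bz)|\le\|\nabla\varphi^\delta\|_\infty\|\nabla\xi\|_\infty\int|\by-\by'|\,\pi(d\by,d\by')\le C\,W_1(\mu,\nu)\le C\,W_2(\mu,\nu).
\]
The same bound holds for $\nabla g_\mu$ with $\|\nabla^2\varphi^\delta\|_\infty$ in place of $\|\nabla\varphi^\delta\|_\infty$.

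The second step assembles the Lipschitz estimate for the bias term. It is a product of three factors:
\begin{itemize}
\item $h(g_\mu(\xi(\bx)))$, which is bounded, and Lipschitz in $(\bx,\mu)$ because $\xi$ is Lipschitz (bounded $\nabla\xi$);
\item $\nabla\xi(\bx)$, which is bounded, and Lipschitz because $\nabla^2\xi$ is globally Lipschitz.
\item $\nabla g_\mu(\xi(\bx))$, which is bounded, and Lipschitz in $(\bx,\mu)$.
\end{itemize}

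I expect the main obstacle to be the factor $\nabla\xi$. Its Lipschitz continuity requires $\nabla^2\xi$ to be bounded, whereas the hypothesis only says $\nabla^2\xi$ is globally Lipschitz. I would proceed as follows:
\begin{itemize}
\item First, read ``bounded derivatives'' together with the hypotheses on $\xi$ as implying a bounded Hessian, which is the intended reading.
\item If that reading is not available, fall back on a localization argument. The drift is then locally Lipschitz in $\bx$ with linear growth, because the bias is bounded. The coefficients are therefore Lipschitz on balls uniformly in $\mu$, and we combine a truncation and stopping-time argument with a nonexplosion estimate from linear growth.
\end{itemize}
Since products of bounded Lipschitz functions are Lipschitz, we obtain
\[
|b(\bx,\mu)-b(\bx',\nu)|\le L\bigl(|\bx-\bx'|+W_2(\mu,\nu)\bigr),
\]
and $|b(\bx,\mu)|\le C(1+|\bx|)$.

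The final step is the fixed point. Fix $T>0$ and a flow $(\mu_t)\in C([0,T];\mathcal P_2)$. The frozen SDE has Lipschitz coefficients, so it has a unique strong solution $X^\mu$. Define $\Phi(\mu)_t=\mathrm{Law}(X^\mu_t)$; this flow lies in $C([0,T];\mathcal P_2)$ by the moment bounds from linear growth and the deterministic initial condition.

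For two input flows, couple the solutions through the same Brownian motion. Gronwall's inequality then gives
\[
\sup_{s\le t}W_2^2(\Phi(\mu)_s,\Phi(\nu)_s)\le C_T\int_0^t\sup_{r\le s}W_2^2(\mu_r,\nu_r)\,ds .
\]
Iterating this estimate shows that some power $\Phi^k$ is a contraction, or alternatively we use an exponentially weighted sup metric. This yields a unique fixed point, hence a unique strong solution on $[0,T]$. Since $T$ is arbitrary, the solution exists and is unique on $[0,\infty)$.
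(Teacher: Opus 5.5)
Your proposal is correct and is essentially the paper's argument: Lemma~\ref{lem:lip_cont} proves exactly your joint Lipschitz bound in $(\bx,W_2)$ together with linear growth, and the paper then cites the standard McKean--Vlasov well-posedness result (Theorem~1.7 of \cite{carmona2016lectures}), which is the fixed-point argument you write out. The obstacle you flag needs neither reinterpretation nor localization, and localization would in any case need more than stopping times, since the drift depends on the law of all paths. Take $M=\|\nabla\xi\|_\infty$, $L=\mathrm{Lip}(\nabla^2\xi)$ and $G(t)=\nabla\xi(\bx+t\boldsymbol e)$ for a unit vector $\boldsymbol e$. Then $r\|G'(0)\|\le\|G(r)-G(0)\|+Lr^2/2\le 2M+Lr^2/2$ for every $r>0$, and optimizing over $r$ gives $\|\nabla^2\xi\|_\infty\le 2\sqrt{ML}$; so the stated hypotheses already imply the bounded Hessian that both you and the paper use.
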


The proof relies on verifying the Lipschitz and linear-growth conditions required by the classical existence theory for McKean--Vlasov SDEs.

\begin{lemma} \label{lem:lip_cont}
Under the assumptions of Theorem~\ref{thm:mk_well},
let 
\begin{equation}
\bb(\bX,\mu)
=
-\frac{1}{\gamma}\nabla \left( U(\bX)
+
\frac{\alpha}{\beta}
\log\!
K_{\tau,\epsilon}\left((\varphi^\delta * \mu_{\xi})(\xi(\bX))\right)
\right), \quad \sigma = \sqrt{\frac{2}{\beta\gamma}}.
\label{eq:b_drift}
\end{equation}
Then there exists $C>0$ such that
\begin{equation}\label{equ:lip}
   \|\bb(\bX_1,\mu_1)-\bb(\bX_2,\mu_2)\|
\le
C\Big(\|\bX_1-\bX_2\|+W_2(\mu_1,\mu_2)\Big) 
\end{equation}
for any $\bX_1,\bX_2\in \mathbb R^{d}$ and $\mu_1,\mu_2\in \mathcal P_2(\mathbb{R}^{d})$.
As a consequence, $\|\bb(\bX,\mu)\| \leq C(1+\|\bX\|)$.
\end{lemma}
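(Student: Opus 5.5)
We plan to compute the drift explicitly and bound it term by term. Write $g_\mu(\bz)=(\varphi^\delta*\mu_\xi)(\bz)=\int_{\mathbb R^d}\varphi^\delta(\bz-\xi(\by))\,\mu(\mathrm d\by)$. The chain rule gives
\[
\bb(\bX,\mu)=-\frac1\gamma\nabla U(\bX)-\frac{\alpha}{\beta\gamma}\,\nabla\xi(\bX)^{\top}\,\frac{K_{\tau,\epsilon}'(g_\mu(\xi(\bX)))}{K_{\tau,\epsilon}(g_\mu(\xi(\bX)))}\,\nabla_{\bz} g_\mu(\xi(\bX)).
\]
The first term is globally Lipschitz in $\bX$ by assumption and does not depend on $\mu$. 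The main work is therefore the second term, which we write as $\nabla\xi(\bX)^\top h(g_\mu(\xi(\bX)))\,\nabla g_\mu(\xi(\bX))$ with $h=K'/K$.

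\textbf{Scalar regularization.} First we record elementary facts about $K=K_{\tau,\epsilon}$. Since $K'(r)=\sigma_{\rm sig}(r/\tau)\in(0,1)$ and $K''=\tau^{-1}\sigma_{\rm sig}'(r/\tau)\le 1/(4\tau)$, and since $K\ge\epsilon$, the function $h=K'/K$ is bounded by $1/\epsilon$. Its derivative $h'=K''/K-(K')^2/K^2$ is bounded by $1/(4\tau\epsilon)+1/\epsilon^2$, so $h$ is globally Lipschitz.

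\textbf{Mollified marginal.} Next we bound $g_\mu$ and $\nabla g_\mu$ uniformly in $\mu$. Because $\varphi^\delta\in C^2$ with bounded derivatives, the functions $g_\mu$ and $\nabla g_\mu$ are bounded by $\|\varphi^\delta\|_{C^1}$ and $\|\nabla\varphi^\delta\|_\infty$, and they are Lipschitz in $\bz$ with constants $\|\nabla\varphi^\delta\|_\infty$ and $\|\nabla^2\varphi^\delta\|_\infty$. For the measure dependence, take any coupling $\Pi$ of $(\mu_1,\mu_2)$. Then
\[
|\nabla g_{\mu_1}(\bz)-\nabla g_{\mu_2}(\bz)|\le\int \big|\nabla\varphi^\delta(\bz-\xi(\by_1))-\nabla\varphi^\delta(\bz-\xi(\by_2))\big|\,\Pi(\mathrm d\by_1,\mathrm d\by_2)\le \|\nabla^2\varphi^\delta\|_\infty\|\nabla\xi\|_\infty\int\|\by_1-\by_2\|\,\mathrm d\Pi,
\]
and similarly for $g_\mu$. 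Optimizing over couplings yields a bound by $W_1\le W_2$. Composing with $\xi$, which is Lipschitz because $\nabla\xi$ is bounded, shows that $\bX\mapsto g_\mu(\xi(\bX))$ and $\bX\mapsto\nabla g_\mu(\xi(\bX))$ are bounded and jointly Lipschitz in $(\bX,\mu)$ with respect to $\|\cdot\|+W_2$.

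\textbf{Assembling the bound.} The bias term is a product of three factors: $\nabla\xi(\bX)$, $h(g_\mu(\xi(\bX)))$, and $\nabla g_\mu(\xi(\bX))$. Each factor is bounded and jointly Lipschitz. For $\nabla\xi$ this uses the Lipschitz property of $\nabla^2\xi$; in fact only the boundedness of $\nabla^2\xi$ matters here, and we take it as implied or assumed on the relevant scale. We apply the telescoping identity $a_1b_1c_1-a_2b_2c_2=(a_1-a_2)b_1c_1+a_2(b_1-b_2)c_1+a_2b_2(c_1-c_2)$. Together with the uniform bounds, this gives \eqref{equ:lip}.

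The main obstacle is precisely the Lipschitz property of $\nabla\xi$ in $\bX$. The stated hypothesis, that $\nabla^2\xi$ is Lipschitz while $\nabla\xi$ is bounded, does not immediately give bounded $\nabla^2\xi$. We would handle this by noting that the assumptions are intended to give $\nabla\xi\in W^{1,\infty}$, or by reading ``globally Lipschitz $\nabla\xi$'' as the intended requirement; this is the point that needs care.

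\textbf{Linear growth.} Finally, fix a reference point, say $\bX_2=0$ and $\mu_2=\delta_0$. Since the bias term is uniformly bounded by $\frac{\alpha}{\beta\gamma}\|\nabla\xi\|_\infty\epsilon^{-1}\|\nabla\varphi^\delta\|_\infty$ independently of $\mu$, we obtain $\|\bb(\bX,\mu)\|\le\|\bb(0,\mu)\|+C\|\bX\|\le C(1+\|\bX\|)$, using $|\nabla U(\bX)|\le|\nabla U(0)|+L\|\bX\|$.
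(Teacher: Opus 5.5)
Your proof is correct and follows essentially the same route as the paper: chain rule, a coupling estimate $\|g_{\mu_1}-g_{\mu_2}\|_{L^\infty}+\|\nabla g_{\mu_1}-\nabla g_{\mu_2}\|_{L^\infty}\le C\,W_1(\mu_1,\mu_2)\le C\,W_2(\mu_1,\mu_2)$, bounded first and second derivatives of $\log K_{\tau,\epsilon}$ coming from $K_{\tau,\epsilon}\ge\epsilon$, and a product/telescoping estimate. The paper groups your last two factors into $H_\mu=\nabla_{\bz}\log K_{\tau,\epsilon}(g_\mu)$, but this is only a difference of bookkeeping.

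The step you single out as the main obstacle is not actually one, so you do not need to reinterpret the hypothesis. Bounded $\nabla\xi$ together with globally Lipschitz $\nabla^2\xi$ already forces $\nabla^2\xi$ to be bounded. To see this, fix unit vectors $v,w$ and a component $\xi_i$, and set $g(t)=\nabla\xi_i(\bx+tv)\cdot w$. Then $|g|\le B:=\|\nabla\xi\|_\infty$, and $g'$ is $L$-Lipschitz with $L=\mathrm{Lip}(\nabla^2\xi)$. If $g'(0)=A>0$ and $L>0$, then $g'(t)\ge A-Lt$ on $[0,A/L]$, so $2B\ge g(A/L)-g(0)\ge A^2/(2L)$, i.e.\ $A\le 2\sqrt{BL}$. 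This is a Landau--Kolmogorov inequality; the case $A<0$ is symmetric, and $L=0$ forces $A=0$. Hence $\nabla\xi$ is Lipschitz. The paper itself just invokes ``boundedness of $\nabla^2\xi$'' at this point without deriving it from the stated assumption, so with this line added your argument is complete under the hypotheses as written.

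Your linear-growth step is also slightly more careful than the paper's ``as a consequence''. You use the $\mu$-uniform bound on the bias term, whereas the Lipschitz estimate alone would only give $C\bigl(1+\|\bX\|+W_2(\mu,\delta_0)\bigr)$, which is not uniform in $\mu$.
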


\begin{proof}
We decompose the drift into a potential and a bias component. Since $\nabla U$ is globally Lipschitz,
\[
\left\|-\frac1\gamma \nabla U(\bX_1)+\frac1\gamma \nabla U(\bX_2)\right\|
\le
\frac{\mathrm{Lip}(\nabla U)}{\gamma}\|\bX_1-\bX_2\|.
\]
It remains to estimate the bias term. Define
\[
f_\mu(\bz):=(\varphi^\delta * \mu_{\xi})(\bz)
=
\int_{\mathbb R^{d}} \varphi^\delta(\bz-\xi(\by))\,\mu(\intd \by),
\quad
H_\mu(\bz):=\nabla_\bz \log\!\big(K_{\tau,\epsilon}(f_\mu(\bz))\big).
\]
For any $\bz\in\mathbb R^m$ and any coupling $\pi\in\Pi(\mu_1,\mu_2)$,
\[
f_{\mu_1}(\bz)-f_{\mu_2}(\bz)
=
\iint
\Big(
\varphi^\delta(\bz-\xi(\by_1))
-
\varphi^\delta(\bz-\xi(\by_2))
\Big)\,\pi(\intd \by_1,\intd \by_2).
\]
Since $\nabla \varphi^\delta$ is bounded and $\xi$ is Lipschitz (because $\nabla \xi$ is bounded), taking the infimum over $\pi$ and supremum over $\bz$ yields
\[
\|f_{\mu_1}-f_{\mu_2}\|_{L^\infty(\mathbb R^m)}
\le
C\,W_1(\mu_1,\mu_2)
\le
C\,W_2(\mu_1,\mu_2).
\]
Similarly, since $\nabla \varphi^\delta$ is $C^1$ with bounded derivative,
$\|\nabla f_{\mu_1}-\nabla f_{\mu_2}\|_{L^\infty(\mathbb R^m)}
\le
C\,W_2(\mu_1,\mu_2)$.

Now set $h(r):=\log K_{\tau,\epsilon}(r)$.
Because $K_{\tau,\epsilon}\ge \epsilon>0$, both $h'$ and $h''$ are bounded. Therefore
\[
\|H_{\mu_1}-H_{\mu_2}\|_{L^\infty(\mathbb R^m)}
\le
C\Big(
\|f_{\mu_1}-f_{\mu_2}\|_{L^\infty}
+
\|\nabla f_{\mu_1}-\nabla f_{\mu_2}\|_{L^\infty}
\Big)
\le
C\,W_2(\mu_1,\mu_2).
\]

Next we control the dependence on $\bX$. Since $\varphi^\delta\in C^2$ with bounded derivatives, $H_\mu$ is globally Lipschitz in $\bz$, uniformly in $\mu$.
Using the boundedness of $\nabla \xi$ and the Lipschitz continuity of $\nabla \xi$ (which follows from the boundedness of $\nabla^2 \xi$), we obtain
\begin{align*}
&\|\nabla \xi(\bX_1)^\top H_{\mu_1}(\xi(\bX_1))
-
\nabla \xi(\bX_2)^\top H_{\mu_2}(\xi(\bX_2))\| \\
&\qquad \le
C\|\bX_1-\bX_2\|
+
C\|H_{\mu_1}(\xi(\bX_1))-H_{\mu_1}(\xi(\bX_2))\|
+
C\|H_{\mu_1}(\xi(\bX_2))-H_{\mu_2}(\xi(\bX_2))\| \\
&\qquad \le
C\|\bX_1-\bX_2\|+C\,W_2(\mu_1,\mu_2).
\end{align*}
Combining with the Lipschitz estimate for $\nabla U$ gives the result.
\end{proof}

\begin{proof}[Proof of Theorem~\ref{thm:mk_well}]
By Lemma~\ref{lem:lip_cont}, the assumptions of Theorem~1.7 in~\cite{carmona2016lectures} are satisfied, and dynamics~\eqref{equ:regularized_MV_x} admits a unique strong solution.
\end{proof}

\subsection{Path-dependent McKean--Vlasov equation}
\label{subsec:path}

The directly regularized McKean--Vlasov dynamics~\eqref{equ:regularized_MV_x} still depends on the instantaneous law 
\(\mu_t={\rm Law}(\bX_t)\). 
For numerical implementation, this law is approximated by an instantaneous empirical measure $\hat{\mu}_t^M = \frac{1}{M}\sum_{i=1}^M \delta_{\bX_t^i}$. 
However, realistic MD simulations can typically afford only a small number of replicas, so the instantaneous empirical measure $\hat{\mu}_t^M$ could be a poor estimator of $\mu_t$, especially after pushforward to a high-dimensional CV space.

To reduce this error, we replace the instantaneous law $\mu_t$ with a \emph{history measure} that accumulates the past of the process up to time $t$. This substitution trades an ensemble average (statistically noisy when $M$ is small) for a time average along the sample path, and the resulting dynamics can be implemented with only a modest number of replicas. Concretely, we define the path-history measure and its CV pushforward
\begin{equation}
\mu_t^q(\omega) = \int_0^1 \delta_{\bX_{ts}(\omega)}\, q(\intd s) \in \mathcal P_2(\mathbb R^{d}), \qquad \mu_{\xi,t}^q(\omega) = \xi_\#\mu_t^q(\omega) =\int_0^1 \delta_{\xi(\bX_{ts}(\omega))}\, q(\intd s), 
\label{eq:mu_q_path}
\end{equation}
where $q$ is a probability measure on $[0,1]$ that assigns weights to the historical trajectory.
For example, the uniform measure $q = \mathrm{Leb}|_{[0,1]}$ recovers the time-averaged empirical measure $\frac{1}{t}\int_0^t \delta_{\bX_r}\,\intd r$, while other choices of $q$ can place larger weight on more recent samples. 

The resulting dynamics is no longer a mean-field SDE. Substituting $\mu_{\xi,t}^q$ for $\mu_{\xi,t}$ in~\eqref{equ:regularized_MV_x} yields the path-dependent SDE
\begin{equation}\label{equ:path_dependent_sde}
\begin{aligned}
    \intd \bX_t &= -\frac{1}{\gamma}\nabla  \left( U(\bX_t) + \frac{\alpha}{\beta} \log K_{\tau,\epsilon} \left( (\varphi^\delta*\mu^q_{\xi,t} )(\xi(\bX_t))\right) \right) \intd t + \sqrt{\frac{2}{\beta\gamma}}\,\intd \bW_t\\
    &= \bb(\bX_t,\mu_t^q)\, \intd t +\sigma\, \intd \bW_t,
\end{aligned}
\end{equation}
where $\bb(\bX, \mu)$ is the drift term defined by Eq. \eqref{eq:b_drift}. Compared with ~\eqref{equ:regularized_MV_x}, the drift now depends on the full path $\{\bX_s\}_{0 \le s \le t}$ through the history measure $\mu_t^q$, and ~\eqref{equ:path_dependent_sde} becomes a \emph{path-distribution dependent} SDE.

\begin{theorem}\label{thm:path_wellposed}
Under the assumptions of Theorem~\ref{thm:mk_well}, for any 
\(q\in\mathcal P([0,1])\), the path-dependent SDE~\eqref{equ:path_dependent_sde}
admits a unique strong solution with initial condition \(\bX_0=\bx\).
\end{theorem}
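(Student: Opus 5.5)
We will treat \eqref{equ:path_dependent_sde} as a functional (path-dependent) SDE whose drift at time $t$ is determined by the path segment $\{\bX_r\}_{0\le r\le t}$, and then apply a Picard iteration argument on $C([0,T];\mathbb R^d)$ for every finite horizon $T$. Although the drift depends on $\omega$ through the history measure $\mu_t^q(\omega)$, this measure is determined pathwise by the trajectory. The equation is therefore a classical path-dependent SDE, $\intd \bX_t=\tilde\bb(t,\bX_{[0,t]})\,\intd t+\sigma\,\intd\bW_t$, where
\[
\tilde\bb(t,\bx_{[0,t]}):=\bb\Bigl(\bx_t,\textstyle\int_0^1\delta_{\bx_{ts}}\,q(\intd s)\Bigr).
\]
The noise is additive, so no stochastic-integral estimates are needed beyond the Brownian path itself.

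\textbf{Step 1 (Lipschitz property of the history map).} For two continuous paths $\bx,\by$ we couple $\mu_t^q[\bx]$ and $\mu_t^q[\by]$ through the same time weight, using the coupling $\int_0^1\delta_{(\bx_{ts},\by_{ts})}\,q(\intd s)$. This gives
\[
W_2\bigl(\mu_t^q[\bx],\mu_t^q[\by]\bigr)^2\le\int_0^1\|\bx_{ts}-\by_{ts}\|^2\,q(\intd s)\le\sup_{0\le r\le t}\|\bx_r-\by_r\|^2 .
\]
The measure $\mu_t^q[\bx]$ lies in $\mathcal P_2$ because a continuous path is bounded on $[0,t]$. Measurability and progressive measurability in $t$ follow from the continuity of $t\mapsto\mu_t^q[\bx]$ in $W_2$, which holds by dominated convergence since $\bx$ is continuous. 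The case $t=0$ gives $\delta_{\bx_0}$, which needs no special treatment.

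\textbf{Step 2 (Lipschitz drift).} Combining Step 1 with Lemma~\ref{lem:lip_cont} yields
\[
\|\tilde\bb(t,\bx)-\tilde\bb(t,\by)\|\le 2C\sup_{r\le t}\|\bx_r-\by_r\|,
\qquad
\|\tilde\bb(t,\bx)\|\le C\bigl(1+\sup_{r\le t}\|\bx_r\|\bigr).
\]
The growth bound follows from the first estimate together with a bound on $\|\bb(\bx,\delta_0)\|$.

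\textbf{Step 3 (fixed point).} We work pathwise. For a fixed Brownian path, define the map
\[
\Phi(\bY)_t=\bx+\int_0^t\tilde\bb(r,\bY_{[0,r]})\,\intd r+\sigma\bW_t
\]
on $C([0,T];\mathbb R^d)$, equipped with the weighted norm $\sup_t e^{-\lambda t}\|\bY_t\|$. Step 2 gives
\[
\|\Phi(\bY)_t-\Phi(\bY')_t\|\le 2C\int_0^t\sup_{u\le r}\|\bY_u-\bY'_u\|\,\intd r\le\frac{2C}{\lambda}e^{\lambda t}\|\bY-\bY'\|_\lambda .
\]
Choosing $\lambda>2C$ makes $\Phi$ a contraction. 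Its unique fixed point is adapted, since $\Phi$ preserves adaptedness and Picard iterates converge uniformly. This fixed point is the strong solution. Uniqueness among all continuous adapted solutions follows from the same estimate combined with Gronwall's inequality applied to $\sup_{u\le t}\|\bX_u-\bX'_u\|$. Because the solutions on different horizons are consistent, we obtain a global solution on $[0,\infty)$.

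\textbf{Main obstacle.} The delicate point is interpreting the statement correctly. If $\mu_t^q$ were meant to be the \emph{law} of the path-history measure, i.e.\ $\mathbb E\,\mu_t^q$, the equation would be a genuine path-distribution dependent SDE. In that case we would instead follow~\cite{wang2018distribution}. We would run a fixed point on the law of the process in $C([0,T];\mathcal P_2)$ and use the bound
\[
W_2\bigl(\mathbb E\mu_t^q,\mathbb E\nu_t^q\bigr)^2\le\int_0^1 W_2(\mu_{ts},\nu_{ts})^2\,q(\intd s)\le\sup_{r\le t}W_2(\mu_r,\nu_r)^2,
\]
obtained by gluing the couplings. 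Everything else is unchanged, so both readings reduce to Lemma~\ref{lem:lip_cont} plus a Gronwall contraction argument.
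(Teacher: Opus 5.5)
Your proof is correct, but it organizes the fixed-point argument differently from the paper.

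\textbf{The paper's route.} It runs a stochastic Picard scheme in which only the history measure is frozen. At step $k+1$ it solves the Markovian SDE $\intd\bX^{k+1}_t=\bb(\bX^{k+1}_t,\mu^{k,q}_t)\,\intd t+\sigma\,\intd\bW_t$ using the classical existence theorem of~\cite{prevot2007concise}. It then derives $\mathbb E[\Delta^{k+1}_T]\le C_T^k\,\mathbb E[\Delta^1_T]/k!$ by It\^o's formula and Gronwall, and obtains convergence in probability and in $L^2(\Omega;C([0,T];\mathbb R^d))$. Existence follows by extracting an a.s.\ convergent subsequence and passing to the limit in the drift by dominated convergence. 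Uniqueness is a separate Gronwall argument.

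\textbf{Your route.} You use the additive noise to turn the problem, for each fixed Brownian path, into a deterministic functional integral equation. You then apply Banach's fixed-point theorem in the weighted norm $\sup_t e^{-\lambda t}\|\cdot\|$ on $C([0,T];\mathbb R^d)$. This gives existence and pathwise uniqueness at once, among all continuous solutions and with no moment requirement. It avoids both the per-step appeal to SDE theory and the probabilistic limit passage. You also check explicitly two points the paper handles only implicitly, through $\mathbb E\sup_t\|\bX^{k+1}_t\|^2<\infty$: the $W_2$-continuity of $t\mapsto\mu^q_t[\bx]$, and the fact that $\mu^q_t\in\mathcal P_2$ automatically for continuous paths.

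\textbf{What each buys.} The paper's scheme is closer to the standard SDE/McKean--Vlasov machinery. With BDG-type estimates it would extend to state-dependent diffusion coefficients, where a pathwise contraction is not available. Yours is shorter and cleaner for the additive-noise equation actually stated.

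Your primary reading of $\mu^q_t$ as the pathwise history measure is the one the paper's proof uses. The law-based alternative you sketch is therefore not needed.
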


The proof proceeds by Picard iteration. 
At each step \(k\), the history measure \(\mu_s^{k,q}\) generated by the previous iterate is frozen, reducing the equation to a standard SDE with Lipschitz coefficients whose well-posedness follows from classical theory~\cite{prevot2007concise}. 
The key observation is that, for two continuous paths \(\bX\) and \(\bY\),
\[
W_2\!\left(\mu_t^q[\bX],\mu_t^q[\bY]\right)^2
\le
\int_0^1 \|\bX_{ts}-\bY_{ts}\|^2\,q(ds)
\le
\sup_{0\le r\le t}\|\bX_r-\bY_r\|^2 .
\]
With the Lipschitz estimate for \(\bb\), this yields convergence of the Picard iterates \(\bX_t^k\) in 
\(L^2(\Omega;C([0,T];\mathbb R^d))\). 
We have the estimate
\[
\mathbb E[\Delta_T^{k+1}]
\le
\frac{C_T^k}{k!}\,\mathbb E[\Delta_T^1],
\qquad
\Delta_t^k
=
\sup_{s\le t}\|\bX_s^k-\bX_s^{k-1}\|^2 .
\]
Uniqueness follows from the same Lipschitz estimate and Gronwall's inequality applied to any two solutions. 
The complete proof is given in Appendix~\ref{app:proof_wellposed}.

The well-posedness result above does not by itself characterize the long-time behavior of the path-dependent dynamics. 
We next show that, under strong dissipativity assumptions, history-dependent McKean--Vlasov dynamics can be asymptotically consistent with the invariant measure of the corresponding instantaneous-law dynamics.
Recall that the long-time behavior of McKean--Vlasov SDE such as 
\eqref{equ:regularized_MV_x} has been studied extensively in recent years; see,
for instance, Theorem~3.1 in~\cite{wang2018distribution}. Let us consider 
\[
    {\rm d}\bX_t
    =
    \bb\bigl(\bX_t,{\rm Law}(\bX_t)\bigr)\,{\rm d}t
    +
    \sigma\,{\rm d}\bW_t ,
\]
and let $P_t^*$ denote its nonlinear semigroup. Assume that $\bb$ is continuous
on $\mathbb R^{d}\times\mathcal P_2$ and bounded on bounded sets. Moreover,
assume that there exist constants $\lambda_0>\lambda_1\ge 0$, $a_0>0$, and
$K_0\ge 0$ such that, for all
$\bx,\by\in\mathbb R^{d}$ and $\mu,\nu\in\mathcal P_2$,
\[
2\langle \bb(\bx,\mu)-\bb(\by,\nu),\bx-\by\rangle
\le
-\lambda_0|\bx-\by|^2
+
\lambda_1 W_2(\mu,\nu)^2,
\]
and
\[
2\langle \bb(\bx,\mu),\bx\rangle
\le
-a_0|\bx|^2
+
K_0\bigl(1+\mu(|\cdot|^2)\bigr).
\]
Then $P_t^*$ admits a unique invariant probability measure
$\mu_{\rm MV}^*\in\mathcal P_2$. Moreover, for every initial law
$\nu\in\mathcal P_2$,
\[
    W_2(P_t^*\nu,\mu_{\rm MV}^*)^2
    \le
    C(\nu)e^{-ct}
\]
for some constants $C(\nu)>0$ and $c>0$.

The dissipativity condition above is restrictive and is not expected to hold for arbitrary molecular potentials. However, under this assumption, replacing the
instantaneous law by a weighted-history dynamics are asymptotically consistent with the invariant measure 
of the corresponding instantaneous-law McKean--Vlasov dynamics in a precise Wasserstein sense.
For a time-weight family $q=(q_t)_{t\ge0}$ with
$q_t\in\mathcal P([0,1])$, we define
\[
    \mu_t^q[\bX]
    :=
    \int_0^1 \delta_{\bX_{ts}}\,q_t({\rm d}s),
    \qquad
    \ell_t^q[\bX]
    :=
    \int_0^1 {\rm Law}(\bX_{ts})\,q_t({\rm d}s).
\]
for a continuous process $\bX_t$.
The first object is the path-history measure of the trajectory, while the
second is the corresponding weighted history of past laws. The fixed-weight history used in~\eqref{equ:path_dependent_sde} is recovered by taking
$q_t\equiv q$.
For $\varepsilon\in(0,1]$, let $\mathcal Q_1(\varepsilon)$ and
$\mathcal Q_2(\varepsilon)$ denote the admissible classes of time-weight families
called $\Pi_1(\varepsilon)$ and $\Pi_2(\varepsilon)$ in~\cite{du2023empirical}.

\begin{proposition}
\label{prop:history_consistency}
Assume that the drift $\bb$ satisfies the monotonicity and moment assumptions
above, together with the $(2+\varrho)$-moment condition required in
\cite{du2023empirical} for some $\varrho>0$. We set
$\kappa=\frac{\varrho}{(d+2)(\varrho+2)}.
$
First, let $q\in\mathcal Q_1(\varepsilon_1)$ and
$\bar q\in\mathcal Q_2(\varepsilon_2)$, and consider the law-history dynamics
\[
    \intd \bX_t
    =
    \bb\bigl(\bX_t,\ell_t^q[\bX]\bigr)\,{\rm d}t
    +
    \sigma\,{\rm d}\bW_t .
\]
Then, for every initial law $\nu\in\mathcal P_{2+\varrho}$, there exists a
constant $C>0$ such that
\[
    \mathbb E_\nu
    \left[
        W_2\!\left(\mu_t^{\bar q}[\bX],\mu_{\rm MV}^*\right)^2
    \right]
    \le
    C t^{-\varepsilon},
    \qquad
    \varepsilon
    =
    \varepsilon_1\wedge \kappa\varepsilon_2 .
\]

Second, let
$q\in\mathcal Q_1(\varepsilon_1)\cap\mathcal Q_2(\varepsilon_2)$ and
$\bar q\in\mathcal Q_2(\varepsilon_2)$, and consider the self-interacting
history-dependent dynamics
\[
    {\rm d}\bX_t
    =
    \bb\bigl(\bX_t,\mu_t^q[\bX]\bigr)\,{\rm d}t
    +
    \sigma\,{\rm d}\bW_t .
\]
Then, for every deterministic initial condition \(\bX_0=\bx\), there exists a
constant $C>0$ such that
\[
    \mathbb E_{\bx}
    \left[
        W_2\!\left(\mu_t^{\bar q}[\bX],\mu_{\rm MV}^*\right)^2
    \right]
    \le
    C t^{-\varepsilon}.
\]
Consequently, both the law-history dynamics and the self-interacting
empirical-history dynamics are asymptotically consistent with the invariant
measure $\mu_{\rm MV}^*$ of the corresponding instantaneous-law McKean--Vlasov
dynamics.
\end{proposition}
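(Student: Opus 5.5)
The proposition is essentially an application of the results of \cite{du2023empirical} to our setting. The plan is to check that the standing hypotheses of that paper are implied by the assumptions listed before the statement, and then to read off the two rates.

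\textbf{Verifying the hypotheses.} The hypotheses to check are:
\begin{itemize}
\item continuity of $\bb$ on $\mathbb R^d\times\mathcal P_2$, which for our drift~\eqref{eq:b_drift} follows from Lemma~\ref{lem:lip_cont};
\item the monotonicity condition with $\lambda_0>\lambda_1$;
\item the coercivity condition with $a_0>0$;
\item the $(2+\varrho)$-moment bound.
\end{itemize}
Under these, the nonlinear semigroup $P_t^*$ has the unique invariant measure $\mu_{\rm MV}^*$ and contracts exponentially in $W_2$, as recalled from \cite{wang2018distribution}. This exponential contraction is the engine of the argument.

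\textbf{Law-history dynamics.} For the first claim I would split
\[
W_2\bigl(\mu_t^{\bar q}[\bX],\mu_{\rm MV}^*\bigr)\le W_2\bigl(\mu_t^{\bar q}[\bX],\ell_t^{\bar q}[\bX]\bigr)+W_2\bigl(\ell_t^{\bar q}[\bX],\mu_{\rm MV}^*\bigr).
\]
\begin{itemize}
\item \emph{Second term.} Since $\ell_t^q$ averages past laws, I would compare ${\rm Law}(\bX_t)$ with $\mu_{\rm MV}^*$ by a synchronous coupling against the stationary McKean--Vlasov process. The monotonicity assumption gives a differential inequality of the form
\[
\tfrac{d}{dt}W_2({\rm Law}(\bX_t),\mu^*)^2\le -\lambda_0 W_2({\rm Law}(\bX_t),\mu^*)^2+\lambda_1 W_2(\ell_t^q,\mu^*)^2 .
\]
By convexity of $W_2^2$ along mixtures, $W_2(\ell_t^q,\mu^*)^2\le\int W_2({\rm Law}(\bX_{ts}),\mu^*)^2\,q_t(ds)$. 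The class $\mathcal Q_1(\varepsilon_1)$ is designed exactly so that this delayed Gronwall inequality closes with decay $t^{-\varepsilon_1}$, using $\lambda_1<\lambda_0$.
\item \emph{First term.} This is the empirical-versus-averaged-law fluctuation of a time average of a mixing process. Here I would invoke the empirical-measure estimate of \cite{du2023empirical} for $\bar q\in\mathcal Q_2(\varepsilon_2)$. The dimension-dependent Fournier--Guillin type rate, combined with the $(2+\varrho)$ moments, produces the exponent $\kappa\varepsilon_2$ with $\kappa=\varrho/((d+2)(\varrho+2))$.
\end{itemize}
Summing the two contributions gives the rate $\varepsilon_1\wedge\kappa\varepsilon_2$.

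\textbf{Self-interacting dynamics.} For the second claim the drift uses $\mu_t^q[\bX]$ rather than $\ell_t^q[\bX]$. I would couple $\bX$ synchronously, with the same Brownian motion and the same initial point, with the law-history process $\bY$ driven by $\ell_t^q[\bY]$. The difference then satisfies
\[
\tfrac{d}{dt}\mathbb E|\bX_t-\bY_t|^2\le-\lambda_0\mathbb E|\bX_t-\bY_t|^2+\lambda_1\mathbb E\,W_2\bigl(\mu_t^q[\bX],\ell_t^q[\bY]\bigr)^2 .
\]
The last term is bounded by
\[
2\int\mathbb E|\bX_{ts}-\bY_{ts}|^2\,q_t(ds)+2\,\mathbb E\,W_2\bigl(\mu^q_t[\bY],\ell^q_t[\bY]\bigr)^2 .
\]
Since $q\in\mathcal Q_2(\varepsilon_2)$, the second piece is again an empirical fluctuation term. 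Since $q\in\mathcal Q_1(\varepsilon_1)$, the same delayed Gronwall argument as before absorbs the first piece. The result is $\mathbb E|\bX_t-\bY_t|^2\lesssim t^{-\varepsilon}$, and then also $\mathbb E\,W_2(\mu_t^{\bar q}[\bX],\mu_t^{\bar q}[\bY])^2\lesssim t^{-\varepsilon}$. Combining with the first claim applied to $\bY$ (deterministic initial law $\delta_{\bx}\in\mathcal P_{2+\varrho}$) finishes the proof.

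\textbf{Main obstacle.} The delicate step is the delayed Gronwall closure. The factor $2$ from the triangle inequality threatens the margin $\lambda_0>\lambda_1$, so I would use Young's inequality with a small parameter instead of the crude factor $2$. The precise definitions of $\Pi_1,\Pi_2$ in \cite{du2023empirical} are what make this closure and the rates work, so I would defer to their Theorems for these quantitative lemmas rather than re-derive them.
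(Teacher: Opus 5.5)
Your proposal follows the paper's route: the paper's proof consists solely of the observation that the statement is Theorems~2.1 and~2.2 of \cite{du2023empirical}, rewritten in the present notation and specialized to constant diffusion, which is exactly your ``verify the hypotheses and read off the rates'' plan. The heuristic sketch you add (the split through $\ell_t^{\bar q}[\bX]$, the delayed Gronwall closure, the coupling with a law-history process) is not needed. Its unproved steps---for example, passing from pointwise decay of ${\rm Law}(\bX_r)$ to the $\bar q$-averaged term, or the fluctuation bound for a non-stationary process---depend on the precise definitions of $\Pi_1,\Pi_2$, so deferring them to that reference, as you do, is the right call.
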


\begin{proof}
This is a direct consequence of Theorems~2.1 and~2.2 of
\cite{du2023empirical}, rewritten in the notation used here and specialized to
the constant-diffusion setting.
\end{proof}

\subsection{Density estimation via functional hierarchical tensor}
\label{subsec:density_estimation}


The path-dependent formulation replaces the instantaneous CV marginal by a history-averaged measure that enables a small number of MD replicas. The numerical implementation still requires an efficient representation of the corresponding CV marginal density 
$\rho_t(\bz) \approx
(\phi_\delta * \widehat\mu_{\xi, n}^{M,q})(\bz)$, where $\widehat\mu_{\xi, t}^{M,q}$ is the finite-replica history measure of $\mu_{\xi, t}^q$ defined in Eq. \eqref{eq:mu_q_path}. In moderate- to high-dimensional CV spaces ($m\sim 10$--$100$), grid-based representations of \(\rho_t\) are infeasible.  We therefore approximate \(\rho_t\) using a functional hierarchical tensor (FHT)  approach \cite{tang2024solving} fitted from the accumulated CV samples. The key idea is to represent the CV marginal in a smooth basis and then compress the resulting coefficient tensor in a hierarchical low-rank format. This yields an approximation that is both smooth and computationally tractable.

Let $\bz=(z_1,\dots,z_m)\in\Omega\subset\mathbb{R}^m$ denote the CVs. For simplicity, we assume that each coordinate has been scaled to the reference interval $[-1,1]$; otherwise, an affine rescaling can be applied. On each coordinate $z_k\in[-1,1]$, we introduce $p$ Gaussian basis functions
\begin{equation}
    \psi_{j,\delta}^{(k)}(z_k)
    = \exp\!\left(-\frac{(z_k-c_j)^2}{2\delta^2}\right),
    \qquad
    c_j=-1+\frac{2(j-1)}{p-1},
    \qquad
    j=1,\dots,p,
    \label{eq:gaussian_basis}
\end{equation}
with equally spaced centers $\{c_j\}_{j=1}^p$ and a common width parameter $\delta$. Since all coordinates share the same reference domain, the same centers and width are used for every $k$.
Let \(\{\widetilde\psi_{j,\ell}^{(k)}\}_{j=1}^p\) denote the orthonormalized basis. The density is approximated by the tensor-product expansion
\begin{equation}
\rho_t(\bz)
\approx
\rho_{\rm FHT}^{(t)}(\bz)
=
\sum_{j_1,\ldots,j_m=1}^p
C^{(t)}_{j_1,\ldots,j_m}
\prod_{k=1}^m
\widetilde\psi_{j_k,\ell}^{(k)}(z_k).
    \label{eq:tensor_expansion}
\end{equation}
where $C\in\mathbb{R}^{p\times\cdots\times p}$ is the order-$m$ coefficient tensor.

A direct representation of $C$ requires $p^m$ coefficients and is therefore infeasible when $m$ is moderate or large.
Instead, FHT approximation replaces this full tensor by a hierarchical low-rank factorization in form of a binary tree. 
For an internal node \(\mathsf n\) of the binary tree, let \(a=a(\mathsf n)\) and \(b=b(\mathsf n)\) be its two child index sets, and let
\[
r=r(\mathsf n):=[m]\setminus(a\cup b)
\]
denote the remaining coordinates. 
The hierarchical low-rank structure assumes that, locally,
\begin{equation}
C^{(t)}(i_1,\ldots,i_m)
=
\sum_{\alpha,\beta,\theta}
C_a(i_a,\alpha)\,
C_b(i_b,\beta)\,
G_{\mathsf n}(\alpha,\beta,\theta)\,
C_r(\theta,i_r),
\label{eq:local_factorization}
\end{equation}
where \(i_a,i_b,i_r\) denote grouped basis indices, and \(G_{\mathsf n}\) is the local core tensor at node \(\mathsf n\). 
When the hierarchical ranks are bounded by \(R\), the number of parameters and the cost of evaluation scale linearly with the CV dimension \(m\), up to rank-dependent factors.

The local cores are estimated from accumulated CV samples
$
\mathcal D_t=\{\bz_\ell\}_{\ell=1}^{N_s},
\bz_\ell=\xi(\bX_{t_\ell}^{(i_\ell)}).
$
Following the sketching construction of~\cite{tang2024solving},
$S_a$, $S_b$, and $S_f$ be sketch tensors for the three coordinate groups, and let
$s_a$, $s_b$, and $s_f$ be the associated sketch functions built from the orthonormalized Gaussian basis.
Contracting~\eqref{eq:local_factorization} with the sketch tensors yields the sketched linear system
\begin{equation}
    B_q(r,s,t)
    =
    \sum_{\alpha,\beta,\theta}
    A_a(r,\alpha)\,
    A_b(s,\beta)\,
    A_f(t,\theta)\,
    G_{\mathsf n}(\alpha,\beta,\theta),
    \label{eq:sketched_tensor_system}
\end{equation}
or, equivalently,
\begin{equation}
    (A_a\otimes A_b\otimes A_f)\,G_{\mathsf n} = B_{\mathsf n}.
    \label{eq:sketched_system}
\end{equation}
The matrices $A_a$, $A_b$, and $A_f$ are the sketch-compressed representations of the block factors. The right-hand side $B_q$ is estimated directly from the samples by
\begin{equation}
    B_{\mathsf n}(r,s,t)
    \approx
    \frac{1}{N_s}\sum_{\ell=1}^{N_s}
    s_a(\bz^{(\ell)}_a,r)\,
    s_b(\bz^{(\ell)}_b,s)\,
    s_f(\bz^{(\ell)}_f,t).
    \label{eq:Bq_empirical}
\end{equation}

The matrices $A_a$, $A_b$, and $A_f$ are obtained from auxiliary sketched moment matrices. For example,
\begin{equation}
    Z_{a;\bar a}(r,u)
    \approx
    \frac{1}{N_s}\sum_{\ell=1}^{N_s}
    s_a(\bz^{(\ell)}_a,r)\,
    s_{\bar a}(\bz^{(\ell)}_{\bar a},u),
    \label{eq:Zaabar_empirical}
\end{equation}
and a rank-$r_a$ factorization of $Z_{a;\bar a}$ provides $A_a$.
Similarly, $A_b$ and $A_f$ are obtained from $Z_{b;\bar b}$ and $Z_{\bar f;f}$, respectively.
Once these reduced matrices are available, the local core tensor $G_{\mathsf n}$ is obtained by solving~\eqref{eq:sketched_system}.
\begin{remark}[Periodic treatment of torsional collective variables]
Equations~(10)--(11) are written for non-periodic CV coordinates on an
interval. For a torsional CV, however, the physical CV space is the torus \(\mathbb T=\mathbb R/(2\pi\mathbb Z)\), and the interval
\([-\pi,\pi)\) is only a coordinate representation. In all molecular examples
involving dihedral angles, the Gaussian features are therefore evaluated with
periodic boundary conditions in the CV space.
More precisely, for an angular coordinate \(\theta\in[-\pi,\pi)\), we use
periodic centers
\[
    c_j=-\pi+\frac{2\pi j}{p},\qquad j=0,\ldots,p-1,
\]
where the endpoints \(-\pi\) and \(\pi\) are not treated as distinct centers.
The non-periodic Gaussian basis in~(10) is replaced by the periodized Gaussian
basis
\[
    \psi^{\rm per}_{j,\delta}(\theta)
    =
    \sum_{\ell\in\mathbb Z}
    \exp\left(
        -\frac{(\theta-c_j+2\pi \ell)^2}{2\delta^2}
    \right).
\]
Equivalently, this basis assigns nonzero weight across the artificial cut at
\(-\pi/\pi\): samples near \(\pi\) contribute to basis functions centered near
\(-\pi\), and vice versa.
\end{remark}

\subsection{Adaptive sampling algorithm} \label{subsec:adpative_sampling}

With the FHT approximation \(\rho_{\rm FHT}^{(t)}\), we apply the  positive regularization 
$
\widetilde\rho_t(\bz)
=
K_{\tau,\epsilon}\!(\rho_{\rm FHT}^{(t)}(\bz))
$ and the bias potential used in the next sampling stage is
$
V_{\rm bias}^{(t)}(\bx)
=
\frac{\alpha}{\beta}\log \widetilde\rho_t(\xi(\bx)).
$
The practical implementation follows an outer-loop adaptive procedure.
At iteration time step $n$, we first generate new trajectory data using the current biased dynamics,
then update the accumulated CV dataset, fit a smooth approximation of the CV marginal
density by the FHT sketching procedure, regularize the fitted density to ensure positivity
and stability, and finally define a new bias potential from the logarithm of the regularized
density. 
After the adaptive stage, the final bias \(V_{\rm bias}^{(T_{max})}\) is frozen and production trajectories are generated under the biased potential
$
U(\bx)+V_{\rm bias}^{(T_{max})}(\xi(\bx)).
$
For samples from this fixed biased ensemble, unbiased averages with respect to the original Gibbs measure are recovered using weights
\[
w(\bx)
\propto
\exp\!\left(\beta V_{\rm bias}^{(T_{max})}(\xi(\bx))\right).
\]
Algorithm 1 summarizes the present path-dependent McKean-Vlasov-based sampling method. 


\begin{algorithm}[ht]
\caption{Path-dependent McKean-Vlasov adaptive enhanced sampling using FHT density estimation.}
\label{alg:adaptive}
\begin{algorithmic}[1]
\REQUIRE Initial configurations $\{\bX_0^{(i)}\}_{i=1}^M$; bias strength $\alpha$; FHT parameters $(p,\delta,r)$ and binary dimension tree $\mathcal{T}$; Softplus parameters $(\epsilon,\tau)$; number of bias updates $T_{\max}$; MD steps per update $N_{\mathrm{step}}$; integration step $\Delta t$; friction $\gamma$; inverse temperature $\beta$
\ENSURE Converged bias potential $V_{\mathrm{bias}}^{(T_{\max})}$; reweighted free-energy surface $F$
 
\STATE \textbf{Initialization:} $\mathcal{D}^{(0)} \leftarrow \emptyset$; \ $\widetilde{\rho}^{(0)} \leftarrow \text{const}$; \ $V_{\mathrm{bias}}^{(0)} \leftarrow 0$ \COMMENT{no initial bias}
 
\FOR{$t = 1, 2, \dots, T_{\max}$}
 
    \STATE \emph{\# Step A --- Trajectory collection under the current bias}
    \STATE Form the biased force field $\bF^{(t-1)}(\bx) = -\nabla U(\bx) - \nabla\xi(\bx)^\top \nabla_{\boldsymbol{\xi}} V_{\mathrm{bias}}^{(t-1)}\!\bigl(\xi(\bx)\bigr)$  
    \FOR{each walker $i = 1, \dots, M$ \textbf{in parallel}}
        \STATE Propagate $N_{\mathrm{step}}$ MD steps from $\bX^{(i)}$ under $\bF^{(t-1)}$ at temperature $T$ using the chosen integrator 
        \STATE Record CV snapshots $\{\xi(\bX_n^{(i)})\}$ every $N_{\mathrm{save}}$ steps and append to $\mathcal{D}_{\mathrm{new}}$
    \ENDFOR
    \STATE $\mathcal{D}^{(t)} \leftarrow \mathcal{D}^{(t-1)} \cup \mathcal{D}_{\mathrm{new}}$
 
    \STATE \emph{\# Step B --- Coordinate rescaling to $[-1,1]^m$}
    \STATE For each coordinate $k=1,\dots,m$, compute $[\xi_k^{\min},\xi_k^{\max}]$ from $\mathcal{D}^{(t)}$ and apply
    \STATE \quad $\widehat{\xi}_k = 2\,\dfrac{\xi_k - \xi_k^{\min}}{\xi_k^{\max}-\xi_k^{\min}} - 1$, \quad with Jacobian factor $J_k = 2/(\xi_k^{\max}-\xi_k^{\min})$
 
    \STATE \emph{\# Step C --- FHT density estimation via sketching}
    \STATE Build the Gaussian basis $\{\psi_{j,\delta}^{(k)}\}_{j=1}^{p}$ of \eqref{eq:gaussian_basis} and orthonormalize it to $\{\widetilde\psi_{j,\delta}^{(k)}\}_{j=1}^p$
    \STATE Draw the sketch tensors $\{S_a,S_b,S_f\}$ and assemble the sketch functions $\{s_a,s_b,s_f\}$
    \FOR{each internal node $\mathsf n \in \mathcal{T}$ in bottom-up order}
        \STATE Let $a = a(\mathsf n)$, $b = b(\mathsf n)$, $f = [m]\setminus(a\cup b)$
        \STATE Estimate the moment matrices $Z_{a;\bar a}$, $Z_{b;\bar b}$, $Z_{\bar f;f}$ from samples via~\eqref{eq:Zaabar_empirical}
        \STATE Compute truncated rank-$r$ factorizations to obtain the block factors $A_a$, $A_b$, $A_f$
        \STATE Estimate the sketched right-hand side $B_{\mathsf n}$ from samples via~\eqref{eq:Bq_empirical}
        \STATE Solve $(A_a \otimes A_b \otimes A_f)\, G_\mathsf n = B_{\mathsf n}$ for the local core $G_{\mathsf n}$
    \ENDFOR
    \STATE Assemble the coefficient tensor from $\{G_{\mathsf n}\}$ and $\{A_a,A_b,A_f\}$ along $\mathcal{T}$ via \eqref{eq:local_factorization}; obtain $\widehat{\rho}_{\mathrm{FHT}}^{(t)}(\widehat{\boldsymbol{\xi}})$ through the expansion \eqref{eq:tensor_expansion}
    \STATE Map back to the original coordinates: $\rho_{\mathrm{FHT}}^{(t)}(\boldsymbol{\xi}) = \widehat{\rho}_{\mathrm{FHT}}^{(t)}\bigl(\widehat{\boldsymbol{\xi}}(\boldsymbol{\xi})\bigr)\,\prod_{k=1}^{m} J_k$
 
    \STATE \emph{\# Step D --- Softplus regularization and bias update}
    \STATE $\widetilde{\rho}^{(t)}(\boldsymbol{\xi}) \leftarrow K_{\tau,\epsilon}\!\bigl(\rho_{\mathrm{FHT}}^{(t)}(\boldsymbol{\xi})\bigr)$, \ where $K_{\tau,\epsilon}(x) = \epsilon + \tau\,\mathrm{Softplus}(x/\tau)$
    \STATE $V_{\mathrm{bias}}^{(t)}(\boldsymbol{\xi}) \leftarrow \alpha\, k_B T \log \widetilde{\rho}^{(t)}(\boldsymbol{\xi})$
\ENDFOR
 
\STATE \emph{\# Production run and FES reconstruction}
\STATE Fix the bias at $V_{\mathrm{bias}}^{(T_{\max})}$ and run $M'$ independent production trajectories under the biased force field $\bF^{(T_{\max})}$
\STATE Assign to each production sample $\boldsymbol{\xi}$ the reweighting weight $w(\boldsymbol{\xi}) \propto \exp\!\bigl(\beta\, V_{\mathrm{bias}}^{(T_{\max})}(\boldsymbol{\xi})\bigr)$
\STATE Reconstruct the unbiased FES $F(\boldsymbol{\xi}) = -k_B T \log \rho_\xi(\boldsymbol{\xi})$ using fixed-bias reweighting, or via MBAR~\cite{shirts2008statistically} when production samples are pooled across multiple bias stages
\end{algorithmic}
\end{algorithm}

\section{Numerical Results}\label{sec:result}
We assess the proposed method on benchmark systems of increasing complexity, ranging from the analytical M\"uller potential to solvated peptides and a protein benchmark. In each case we examine two complementary aspects: the efficiency of barrier crossing during the adaptive stage, and the quality of the reweighted free-energy surface (FES) recovered by reweighting after the bias is frozen. Throughout this section, the FES is defined up to an additive constant by
\begin{equation}
    F(\boldsymbol{\xi}) = -k_B T \log \rho_{\xi}(\boldsymbol{\xi}) + C,
\end{equation}
where $\rho_{\xi}$ denotes the pushforward density in the CV space. When one- or two-dimensional marginals are reported, the remaining CVs are integrated out before applying the logarithmic transformation.

\subsection{M\"uller potential}
\label{subsec:muller}

We first consider the M\"uller potential~\cite{muller1979location}, a two-dimensional model landscape that provides a controlled test for both enhanced exploration and FES reconstruction. The potential is defined by
\begin{equation}
    U(x, y) = \sum_{i=1}^{4} A_i \exp\!\Big[ a_i (x - x_i^0)^2 + b_i (x - x_i^0)(y - y_i^0) + c_i (y - y_i^0)^2 \Big],
    \label{eq:muller}
\end{equation}
where the parameters $\{A_i, a_i, b_i, c_i, x_i^0, y_i^0\}$ are given in~\cite{muller1979location}. The landscape contains three metastable wells separated by pronounced barriers, making it a stringent benchmark for barrier-crossing dynamics. Since the physical coordinates already form a two-dimensional CV space, we work directly with $\boldsymbol{\xi}=(x,y)$ and simulate the biased overdamped Langevin dynamics with time step $\Delta t = 0.005$, friction coefficient $\gamma = 5.0$, and thermal energy $k_B T = 2.5$.

The bias is updated every $100{,}000$ time steps, and all samples collected so far are used to refit the density estimator. The FHT approximation employs $p = 31$ Gaussian basis functions per coordinate, width $\sigma = 0.2$, and maximal bond dimension $r = 15$. The Softplus regularization parameters are set to $\epsilon = 0.1$ and $\tau = 0.1$.

To quantify the influence of the bias strength, we test $\alpha = 13$, $16$, and $20$. Figure~\ref{fig:muller_combined} shows that the unbiased dynamics remains trapped in a single well and exhibits no inter-basin transition over the simulated time horizon. In contrast, the biased dynamics generates frequent transitions among all three basins, with the number of observed crossings increasing from $26$ to $120$ and then to $196$ as $\alpha$ is increased. This monotone trend is consistent with the effective-temperature interpretation derived in Section~2.2: a larger $\alpha$ produces a flatter CV-space distribution and facilitates barrier crossing.

After $150$ bias updates, the bias potential is frozen and a production trajectory of $6 \times 10^6$ steps is generated for reweighting. The reweighted FES in Figure~\ref{fig:muller_combined}(f) accurately reproduces the three dominant metastable basins and the low-free-energy channels connecting them. The difference map in Figure~\ref{fig:muller_combined}(g) shows that the remaining discrepancies are concentrated near the boundary of the sampled region and in sparsely visited high-energy areas, while the low-energy portion of the landscape is recovered with good fidelity.

\begin{figure}
    \centering
    \includegraphics[width=0.95\linewidth]{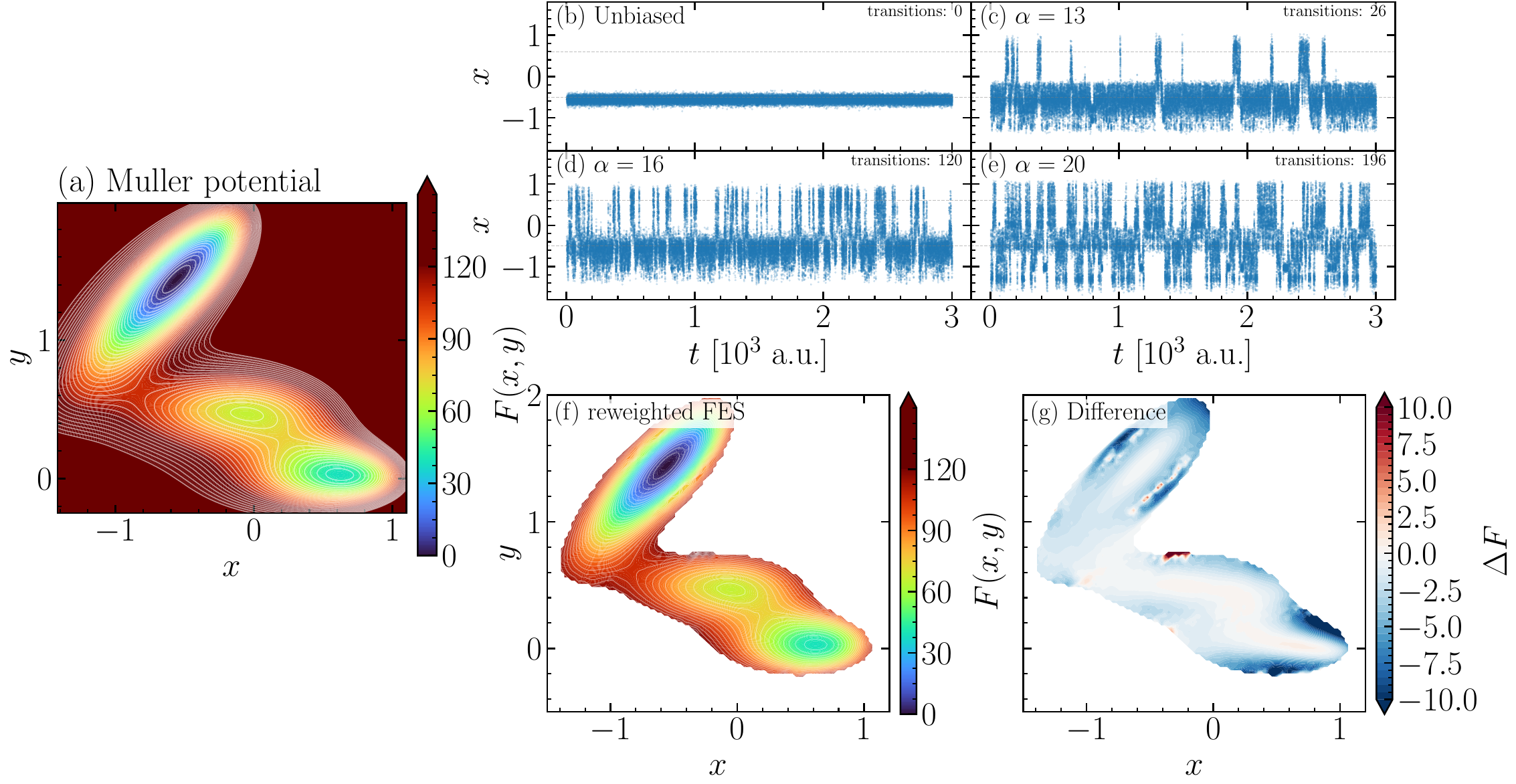}
    \caption{Benchmark of the 2D M\"uller potential. (a) Contours of the reference potential~\eqref{eq:muller}. (b)--(e) Time series of the $x$ coordinate for unbiased dynamics and for biased dynamics with $\alpha = 13$, $16$, and $20$; the numbers in the panel titles report the corresponding counts of inter-basin transitions over the same simulation horizon. (f) Reweighted FES obtained after freezing the bias. (g) Pointwise difference between the reweighted FES and the reference surface on the sampled domain.}
    \label{fig:muller_combined}
\end{figure}

\subsection{Alanine systems}
Alanine peptides provide standard molecular benchmarks with increasingly rich torsional free-energy landscapes. We first study alanine dipeptide in a two-dimensional CV space, and then consider the four-dimensional ACE-(ALA)$_2$-NME system.

\subsubsection{ACE-ALA-NME system}
We begin with alanine dipeptide (Ace--Ala--Nme, denoted Ala2), a canonical test case for conformational sampling. The solute is solvated in $383$ TIP3P water molecules, and all-atom MD simulations are performed in the canonical (NVT) ensemble at $T = 300$ K using the Amber99-SB force field~\cite{hornak2006comparison}. The time step is $0.001$ ps. Periodic boundary conditions are applied in all spatial directions. Van der Waals interactions are truncated at $0.9$ nm, and long-range electrostatics are treated by the smooth particle mesh Ewald (PME) method with real-space cutoff $0.9$ nm and reciprocal-space grid spacing $0.12$ nm.

The two backbone torsion angles $\phi(C,N,C_{\alpha},C)$ and $\psi(N,C_{\alpha},C,N)$ are used as CVs. The bias is updated every $200{,}000$ MD steps, and configurations are recorded every $50$ steps for density estimation and post-processing. Figure~\ref{fig:ala2_exploration} shows that the unbiased trajectory remains concentrated in a restricted angular region, whereas the biased trajectories progressively cover the full $\phi$ range as $\alpha$ increases from $0.5$ to $2.0$. The corresponding empirical marginal density becomes markedly flatter, indicating that the adaptive bias suppresses repeated visits to the most frequently sampled torsional states.

After $20$ bias updates, the bias is frozen and $32$ independent production simulations are performed for FES estimation. Figure~\ref{fig:ala2_fes_convergence} reports the reweighted one-dimensional marginals $F(\phi)$ and $F(\psi)$ at $t=500$, $1000$, $2000$, and $4000$ ps. Relative to unbiased MD, the biased dynamics substantially reduces the run-to-run variability and recovers the locations of the principal wells and barriers at much earlier times. The improvement is systematic with respect to $\alpha$: even $\alpha=0.5$ accelerates convergence, while $\alpha=1.0$ and $2.0$ yield the most stable profiles in the early part of the production stage.

\begin{figure}
    \centering
    \includegraphics[width=0.95\linewidth]{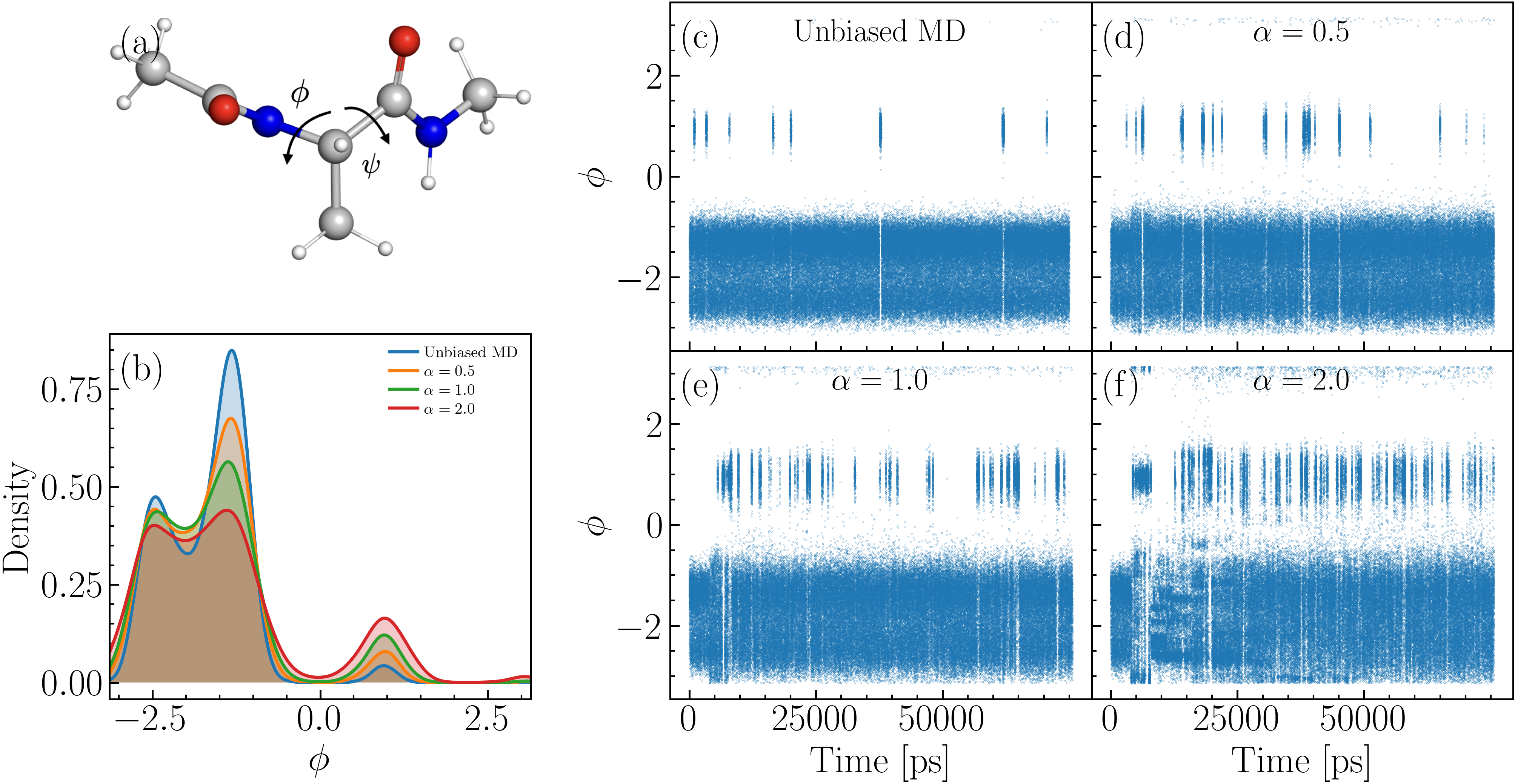}
    \caption{Benchmark of the Ala2 molecule with 2 CVs. (a) Molecular structure of alanine dipeptide. (b) Estimated marginal density of $\phi$ obtained from trajectories generated with different bias strengths. (c)--(f) Time series of the dihedral angle $\phi$ for unbiased MD and for biased dynamics with $\alpha=0.5$, $1.0$, and $2.0$, respectively. Increasing $\alpha$ broadens the sampled angular range and produces more frequent transitions between metastable regions.}
    \label{fig:ala2_exploration}
\end{figure}

\begin{figure}
    \centering
    \includegraphics[width=0.95\linewidth]{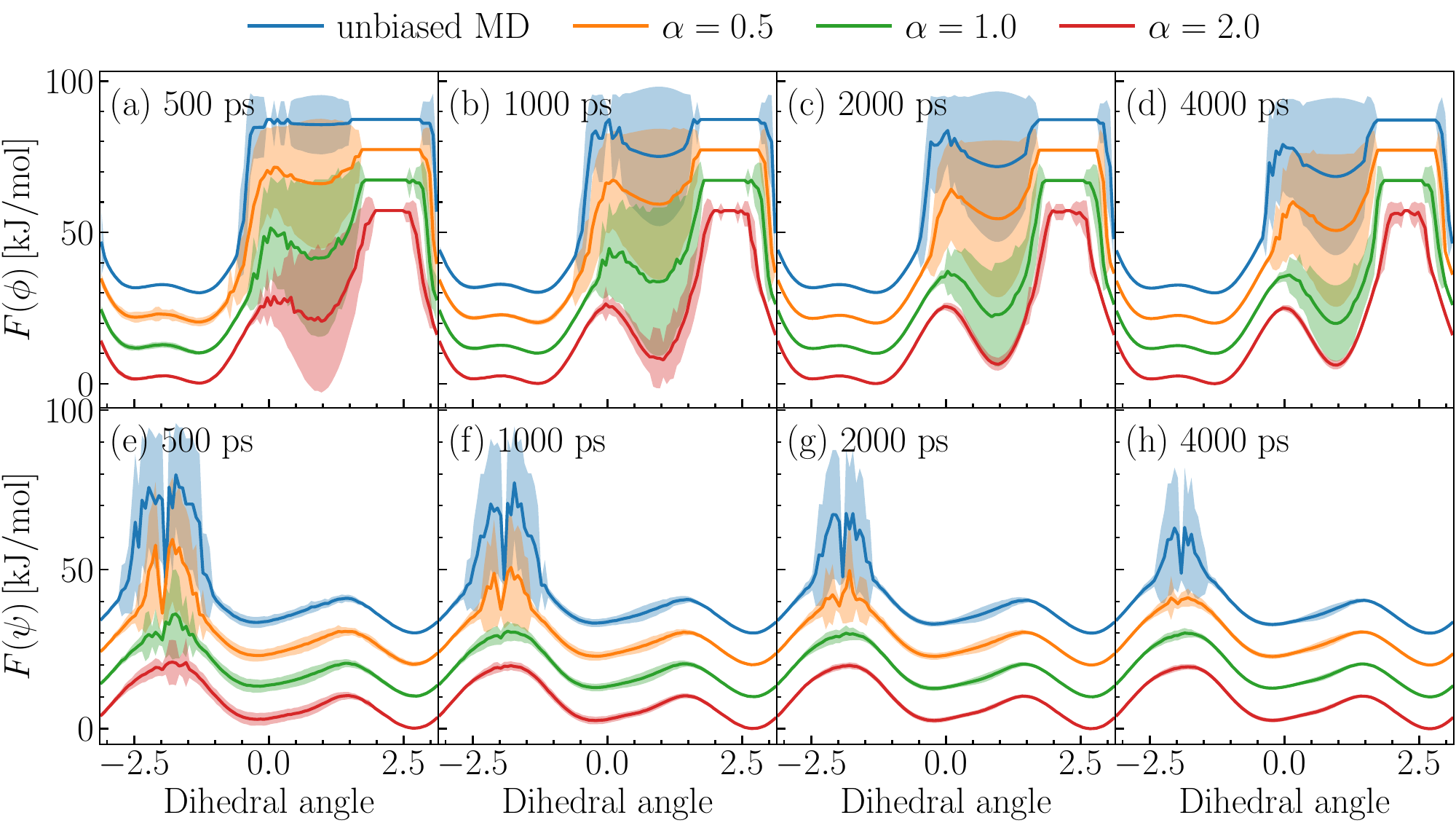}
    \caption{Convergence of the marginal free-energy surfaces $F(\phi)$ (top row) and $F(\psi)$ (bottom row) for Ala2 after the bias is frozen at iteration $20$. Results are averaged over $32$ independent production runs. Columns correspond to production lengths $t=500$, $1000$, $2000$, and $4000$ ps. Solid curves denote the mean FES, and shaded bands indicate the variance across runs. Larger $\alpha$ leads to faster convergence and lower statistical uncertainty than unbiased MD.}
    \label{fig:ala2_fes_convergence}
\end{figure}

\subsubsection{ACE-(ALA)$_2$-NME system}
We next consider the ACE-(ALA)$_2$-NME system, modeled with the Amber99 force field and solvated in $1113$ explicit TIP3P water molecules in a cubic box of size $3.23\,\mathrm{nm} \times 3.23\,\mathrm{nm} \times 3.23\,\mathrm{nm}$. Simulations are carried out in the NVT ensemble at $T = 300\,\mathrm{K}$ with time step $0.001\,\mathrm{ps}$. Short-range van der Waals interactions are truncated at $0.9\,\mathrm{nm}$, and electrostatics are treated by PME with real-space cutoff $0.9\,\mathrm{nm}$. This system probes a four-dimensional torsional CV space and therefore provides a more stringent scalability test than Ala2.

The CVs are the backbone torsions $(\phi_1,\psi_1,\phi_2,\psi_2)$. Representative time series for $\phi_1$, $\phi_2$, and $\psi_1$ are shown in Figure~\ref{fig:ala4_time_seies}. Without bias, the dynamics remains trapped for long periods in a small number of metastable regions. Once the adaptive bias is activated, transitions become substantially more frequent and the angular space is explored more broadly, especially for $\alpha=2.0$.

Figure~\ref{fig:ala4_1d_marginal} summarizes the convergence of the one-dimensional marginals along the four torsional coordinates. The major minima and barrier locations are already well resolved after approximately $4000$ ps, while extending the production run to $8000$ ps further smooths the profiles and reduces uncertainty. The corresponding two-dimensional marginals in Figure~\ref{fig:ala4_2d_marginal} reveal multiple metastable basins and well-defined transition channels in representative CV pairs. Taken together, these results show that the tensor-based density estimator remains effective when the bias is constructed in a four-dimensional CV space.

\begin{figure}
    \centering
    \includegraphics[width=0.95\linewidth]{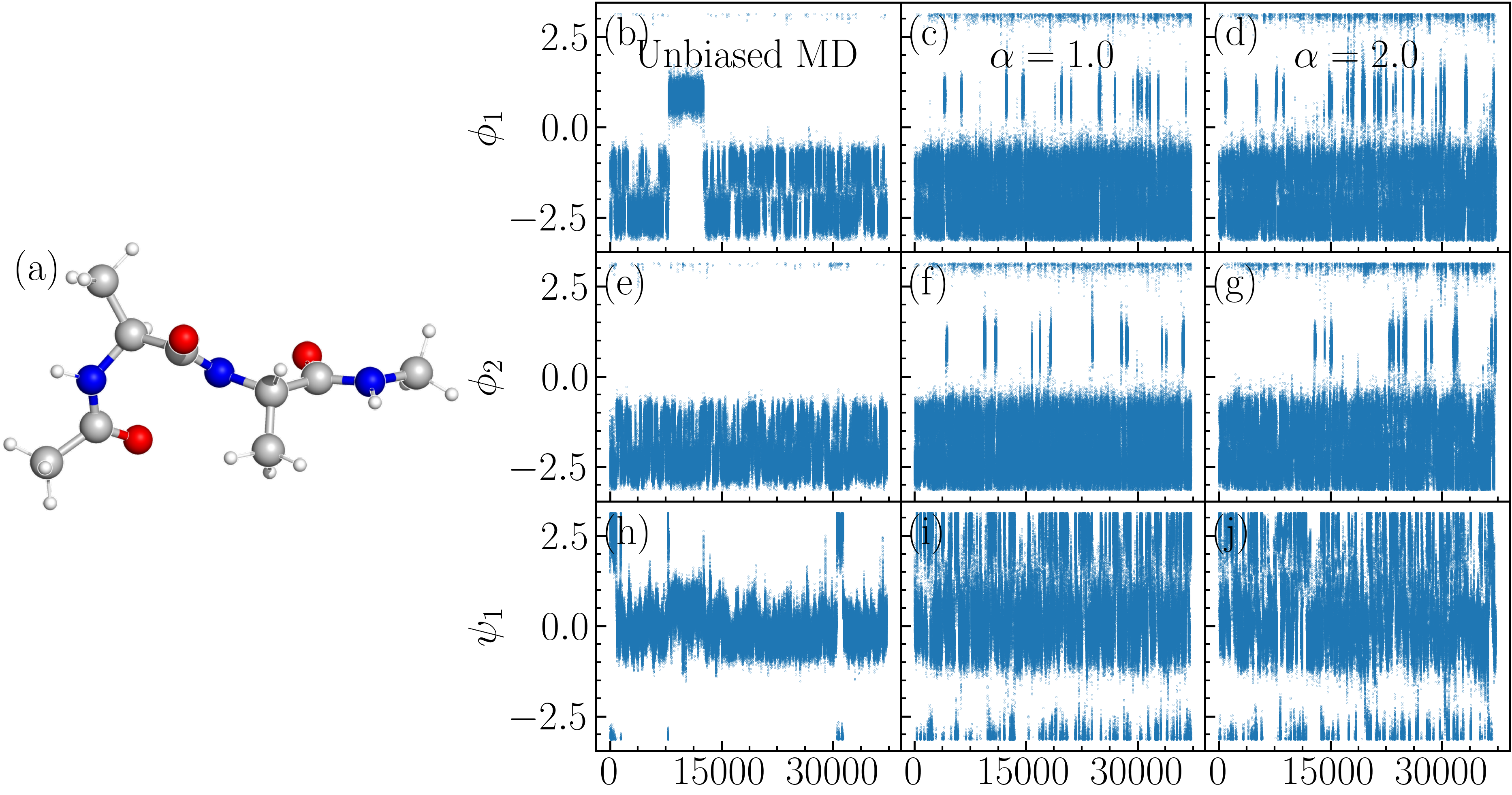}
    \caption{ACE-(ALA)$_2$-NME benchmark with 3 CVs. (a) Molecular structure of the system. The remaining panels show time series of three representative torsions under unbiased MD and biased dynamics. Columns correspond to the unbiased trajectory and to biased simulations with $\alpha=1.0$ and $\alpha=2.0$, while rows correspond to $\phi_1$ (top), $\phi_2$ (middle), and $\psi_1$ (bottom). The adaptive bias produces substantially more frequent transitions and broader angular coverage.}
    \label{fig:ala4_time_seies}
\end{figure}

\begin{figure}
    \centering
    \includegraphics[width=0.95\linewidth]{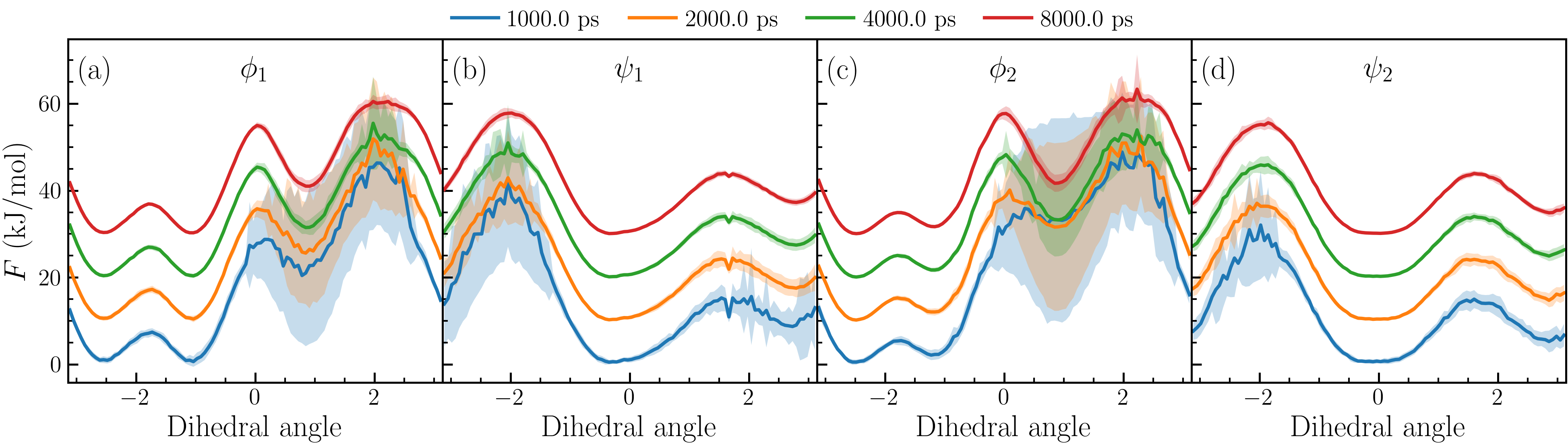}
    \caption{One-dimensional marginal free-energy profiles for ACE-(ALA)$_2$-NME at different production lengths. Panels (a)--(d) show $F(\phi_1)$, $F(\psi_1)$, $F(\phi_2)$, and $F(\psi_2)$, respectively. Curves correspond to production lengths of $1000$, $2000$, $4000$, and $8000$ ps, and the shaded bands indicate the associated variance across independent runs. The principal features of the FES are already recovered after about $4000$ ps.}
    \label{fig:ala4_1d_marginal}
\end{figure}

\begin{figure}
    \centering
    \includegraphics[width=0.8\linewidth]{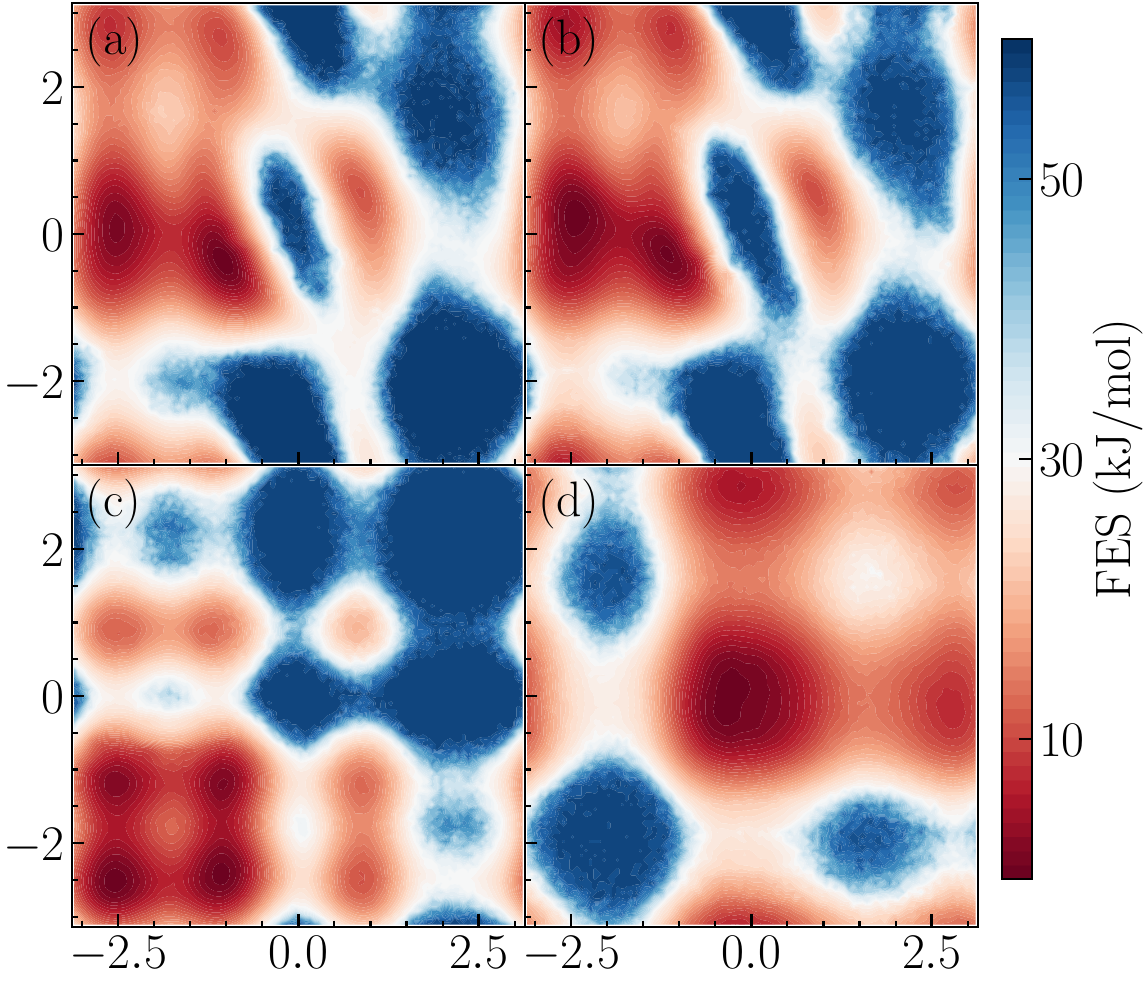}
    \caption{Two-dimensional marginal free-energy surfaces for representative pairs of torsional coordinates in ACE-(ALA)$_2$-NME. Panels (a)--(d) correspond to $(\phi_1,\psi_1)$, $(\phi_2,\psi_2)$, $(\phi_1,\phi_2)$, and $(\psi_1,\psi_2)$, respectively. The surfaces resolve the dominant metastable basins and the principal transition channels in the reduced CV space.}
    \label{fig:ala4_2d_marginal}
\end{figure}

\subsection{Peptoid systems}
\subsubsection{s-(1)-phenylethyl peptoid}
We next consider the peptoid system s-(1)-phenylethyl (s1pe), taken from~\cite{weiser2019cgenff}. The molecule is solvated in $546$ TIP3P water molecules and simulated in the NVT ensemble at $T = 300$ K. All remaining MD parameters are identical to those used for Ala2. We use the three torsion angles $\omega(C_{\alpha},C,N,C_{\alpha})$, $\phi(C,N,C_{\alpha},C)$, and $\psi(N,C_{\alpha},C,N)$ as CVs, which yields a three-dimensional biasing space.

Figure~\ref{fig:chi1_explore} examines how the bias strength affects exploration of the torsional landscape. In the unbiased run, and also for the weak bias $\alpha=2.0$, the dynamics remains trapped near a small subset of metastable states for long periods. Increasing the bias strength to $\alpha=4.0$ and $8.0$ leads to repeated switches between the major $\omega$ states and substantially broader coverage of the angular domain. The empirical marginal density correspondingly becomes less concentrated, which is consistent with improved exploration.

After $50{,}000$ ps of adaptive sampling, the bias is frozen and additional production runs are carried out for reweighting. Figure~\ref{fig:chi1_fes} shows the resulting one-dimensional marginals $F(\omega)$, $F(\phi)$, and $F(\psi)$. The reported mean and variance are computed from $32$ independent trajectories of length $50{,}000$ ps. The resulting profiles are smooth and reproducible, with narrow uncertainty bands except near high-free-energy regions, indicating that the method remains stable when the bias is constructed in a three-dimensional CV space.

\begin{figure}
    \centering
    \includegraphics[width=0.95\linewidth]{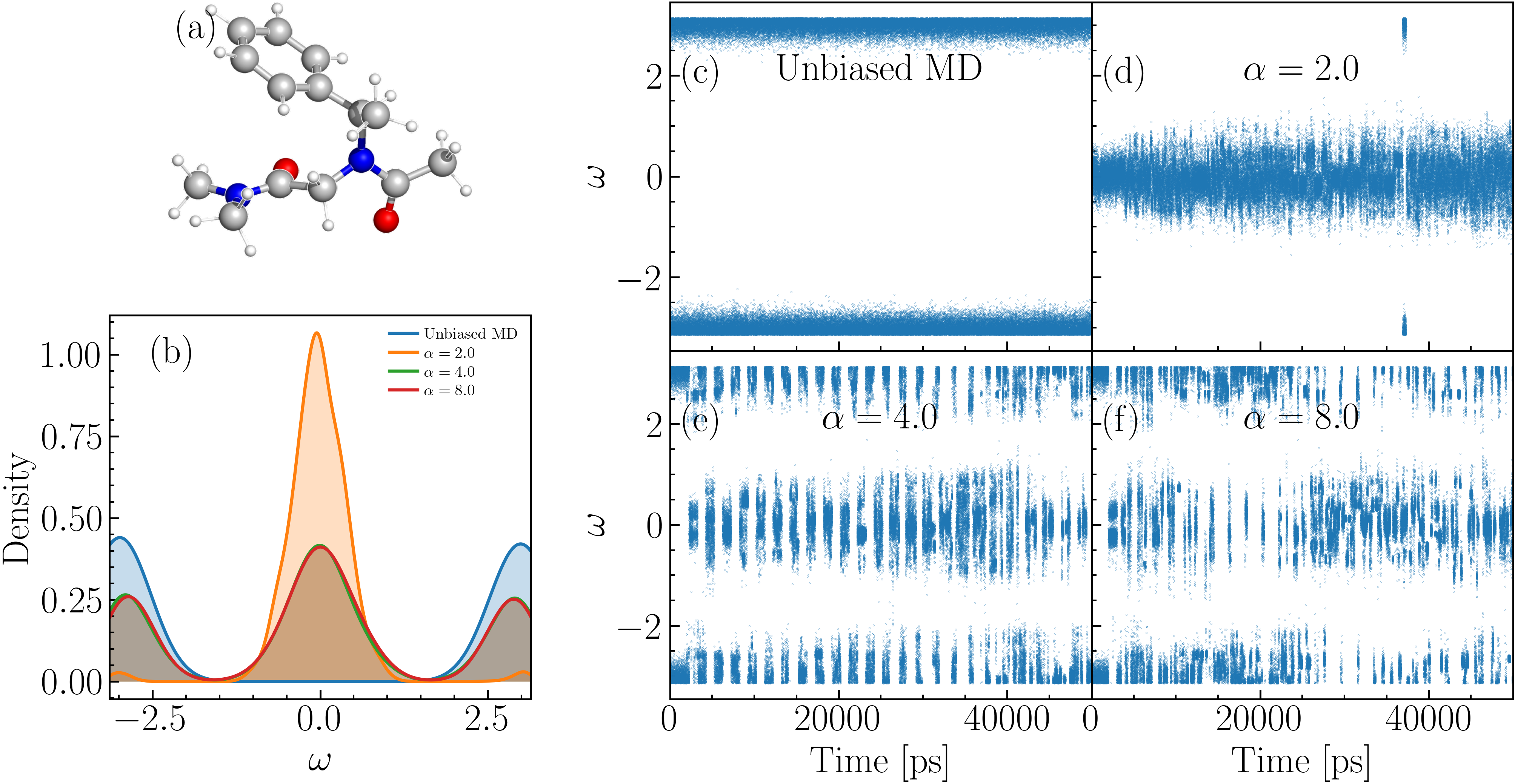}
    \caption{s-(1)-phenylethyl peptoid benchmark with 3 CVs. (a) Molecular structure of s1pe. (b) Estimated marginal density of $\omega$ obtained from trajectories generated with different bias strengths after $50{,}000$ ps. (c)--(f) Time series of $\omega$ for unbiased MD and for biased dynamics with $\alpha=2.0$, $4.0$, and $8.0$, respectively. Larger $\alpha$ yields more frequent transitions and broader exploration of the torsional space.}
    \label{fig:chi1_explore}
\end{figure}

\begin{figure}
    \centering
    \includegraphics[width=0.95\linewidth]{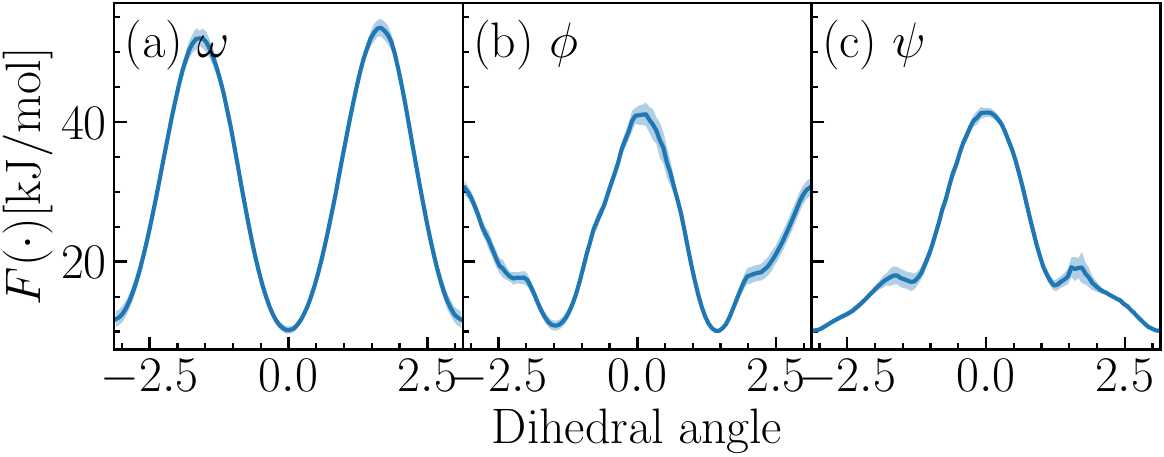}
    \caption{Reweighted one-dimensional marginal free-energy profiles for s-(1)-phenylethyl peptoid after freezing the bias at $50{,}000$ ps. Panels (a)--(c) show $F(\omega)$, $F(\phi)$, and $F(\psi)$, respectively. Solid curves denote the mean over $32$ independent production runs, and shaded bands indicate the corresponding variance.}
    \label{fig:chi1_fes}
\end{figure}

\subsubsection{Peptoid trimer $(\mathrm{s1pe})_3$}
As a higher-dimensional test, we consider the peptoid trimer $(\mathrm{s1pe})_3$ and use the nine torsional variables $\omega_1,\phi_1,\psi_1,\omega_2,\phi_2,\psi_2,\omega_3,\phi_3,\psi_3$ as CVs. This example probes the regime in which conventional grid-based biasing strategies become prohibitively expensive.

Figure~\ref{fig:chi3_explore} shows representative biased trajectories for all nine torsional coordinates. Frequent switches between multiple angular states are observed across the entire set of CVs, indicating that the adaptive bias continues to promote barrier crossing even in a nine-dimensional CV space. The corresponding reweighted one-dimensional marginals are shown in Figure~\ref{fig:chi3_fes}. Each torsion exhibits a structured, multimodal free-energy profile with clearly resolved minima and barriers. The absence of spurious oscillations in these marginals supports the robustness of the FHT density estimator in the moderate-to-high-dimensional regime.

\begin{figure}
    \centering
    \includegraphics[width=0.95\linewidth]{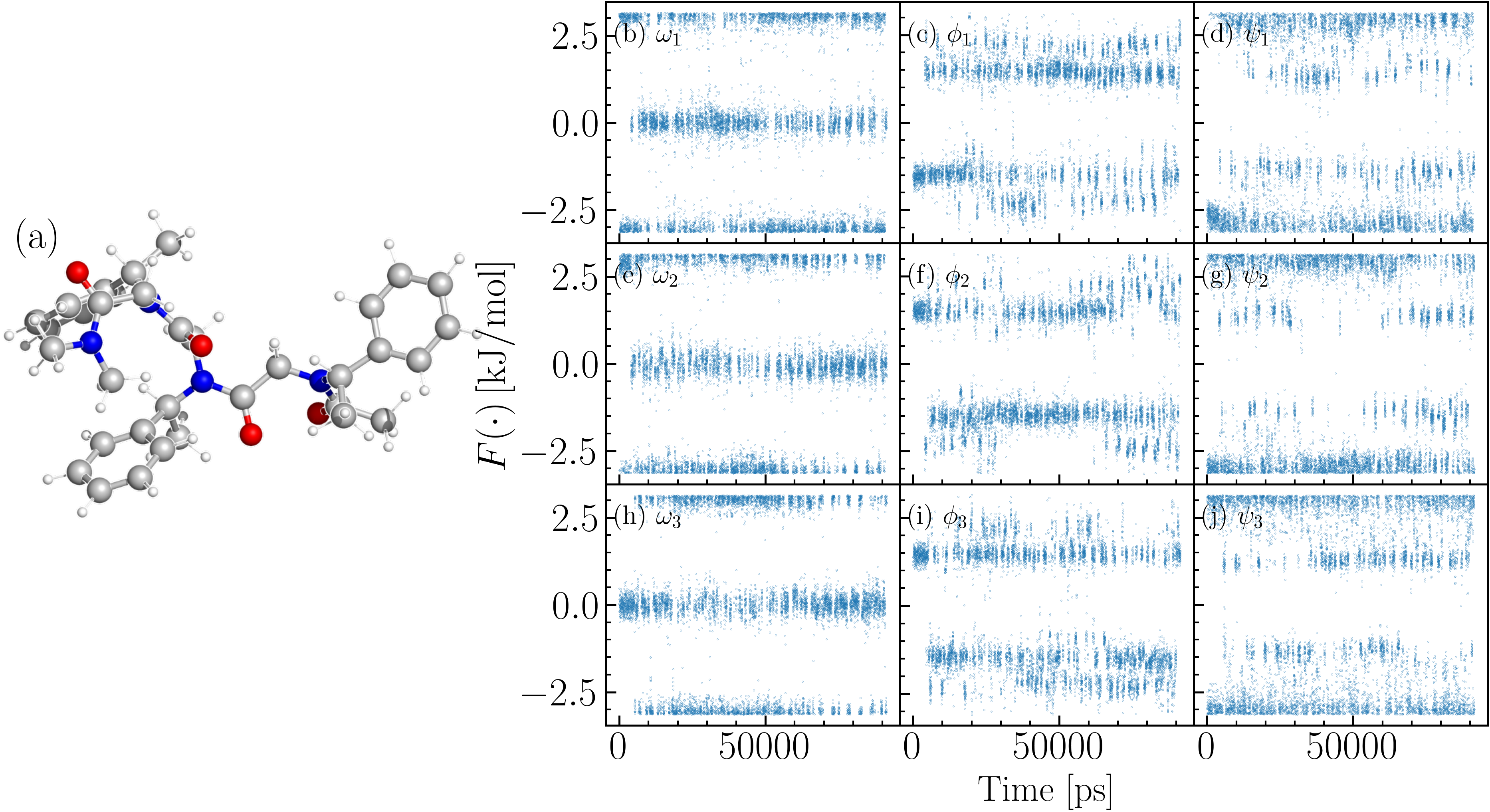}
    \caption{Peptoid trimer benchmark with 9 CVs. (a) Molecular structure of $(\mathrm{s1pe})_3$. (b)--(j) Representative time series of the nine torsional CVs $\omega_1$, $\phi_1$, $\psi_1$, $\omega_2$, $\phi_2$, $\psi_2$, $\omega_3$, $\phi_3$, and $\psi_3$ during biased sampling. Frequent transitions are observed along all coordinates, indicating sustained exploration of the high-dimensional torsional landscape.}
    \label{fig:chi3_explore}
\end{figure}

\begin{figure}
    \centering
\includegraphics[width=0.95\linewidth]{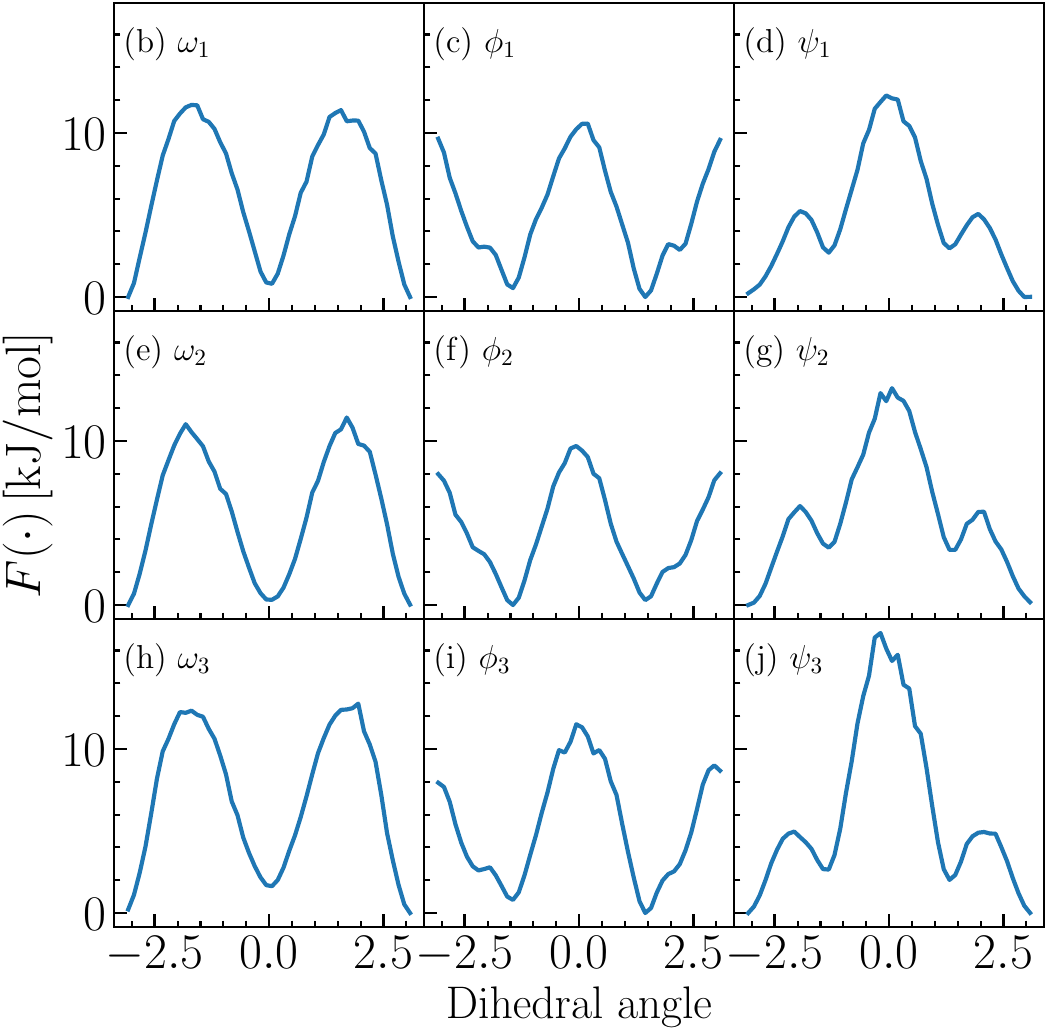}
    \caption{Reweighted one-dimensional marginal free-energy profiles for the peptoid trimer $(\mathrm{s1pe})_3$. Panels (b)--(j) correspond to $F(\omega_1)$, $F(\phi_1)$, $F(\psi_1)$, $F(\omega_2)$, $F(\phi_2)$, $F(\psi_2)$, $F(\omega_3)$, $F(\phi_3)$, and $F(\psi_3)$, respectively. The multimodal structure of these marginals confirms that the method can recover informative free-energy profiles even when the bias is built in a nine-dimensional CV space.}
    \label{fig:chi3_fes}
\end{figure}

\subsection{Protein benchmarks}
\subsubsection{Chignolin (1UAO)}
\label{subsubsec:1uao}

We next consider Chignolin (PDB: 1UAO), a designed 10-residue peptide
that folds into a $\beta$-hairpin structure and is widely used as a benchmark
for peptide folding simulations.
This system presents a substantially more challenging application than the model peptides considered above, as it exhibits a complex folding landscape with multiple metastable states ranging from the native fold to partially and fully unfolded conformations.
The molecule is solvated in $1645$ TIP3P water molecules with $7$ Na$^+$ and $5$ Cl$^-$ counterions to neutralize the system.
MD simulations are performed in the NVT ensemble at $T = 300$\,K using the Amber99-SB force field~\cite{hornak2006comparison} with a time step of $\Delta t = 0.0001$\,ps; all other simulation parameters follow the Ala2 setup.
The 16 backbone dihedral angles serve as collective variables, defining a 16-dimensional CV space.
The FHT density estimate uses a Gaussian basis with $p = 21$ basis functions per coordinate, width $\sigma = 0.1$, and maximal internal bond dimension $r = 15$.
The Softplus regularization parameters are set to $\epsilon = 1$ and $\tau = 5$, with bias strength $\alpha = 2$.

Figure~\ref{fig:1uao_exploration} illustrates how the biased dynamics progressively drives the system away from the native fold and into under-sampled regions of configuration space.
Panel~(a) shows the radius of gyration $R_g$ along a single-walker trajectory, colored by iteration number.
During the early iterations the walker remains near the compact native state ($R_g \approx 5$\,\AA), but as the bias strengthens the trajectory begins to visit increasingly extended conformations, with $R_g$ exceeding $8$\,\AA\ in later iterations.
Panel~(b) summarizes the $R_g$ distribution across all walkers as a function of iteration: both the median and the interquartile range shift steadily upward, confirming that the bias systematically promotes exploration of non-native conformations.
Panels~(c) and~(d) contrast within-iteration $R_g$ traces at iteration~5 and iteration~50, respectively.
At iteration~5 the walkers remain tightly clustered around the folded state, whereas by iteration~50 they span a broad range of $R_g$ values and undergo frequent transitions between compact and extended conformations, demonstrating that the evolved bias effectively flattens the free-energy barriers separating metastable basins.

To further characterize the resulting free-energy landscape, Figure~\ref{fig:1uao_fes} shows two-dimensional FESs reconstructed from the fixed-bias production trajectories using the multistate Bennett acceptance ratio (MBAR) estimator~\cite{shirts2008statistically}.
Panel~(a) displays the native $\beta$-hairpin structure of Chignolin for reference.
In the $(\mathrm{RMSD}_{C_\alpha}, R_g)$ projection (panel~b), the FES resolves a compact folded basin~$F$ at low RMSD and small $R_g$, an intermediate basin~$I$ at moderate RMSD, and two unfolded states $U_1$ and $U_2$ of progressively larger radius of gyration.
Representative structures extracted from each basin confirm the structural interpretation: $F$ corresponds to the native $\beta$-hairpin, $I$ to a partially disrupted turn, and $U_1$/$U_2$ to increasingly extended backbone conformations.
The $(d_1, d_2)$ projection (panel~c), where $d_1$ and $d_2$ denote the distances between the hydrogen-bonding atom pairs Asp3\,N--Gly7\,O and Asp3\,N--Thr8\,O, respectively, provides a complementary view consistent with previous studies of Chignolin folding~\cite{satoh2006folding} and reveals an extended diagonal low-free-energy channel connecting the principal metastable basins.
These results demonstrate that the proposed method can resolve multiple structurally distinct metastable states in a realistic molecular system while maintaining the statistical quality needed for reliable MBAR-based free-energy reconstruction.

\begin{figure}[t]
    \centering
    \includegraphics[width=0.9\linewidth]{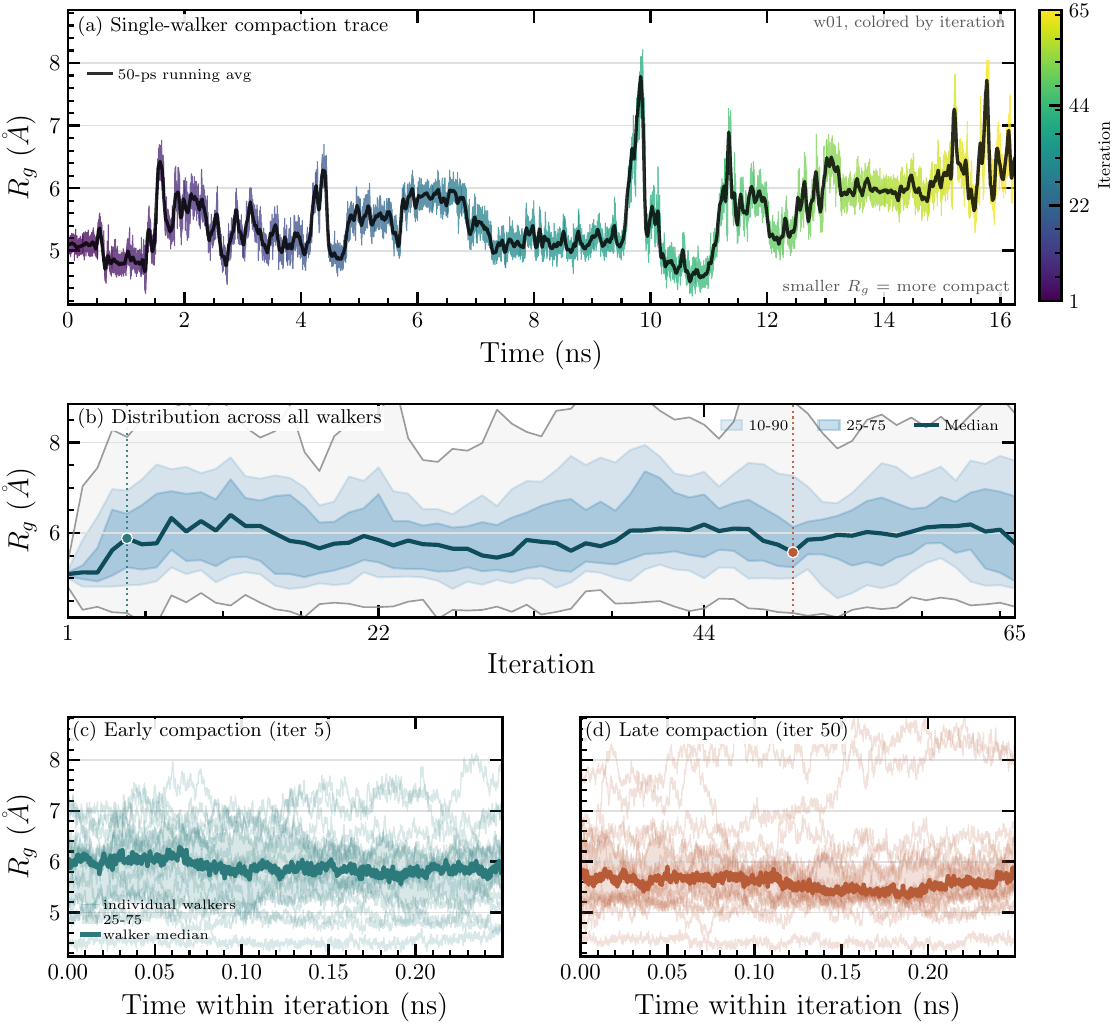}
    \caption{Configurational exploration of Chignolin (1UAO) with 16 CVs, measured by the radius of gyration $R_g$.
    (a)~$R_g$ trajectory of a single walker colored by iteration number, with the 50-ps running average shown in black.
    (b)~$R_g$ distribution across all walkers as a function of iteration; shaded bands show the 10--90th and 25--75th percentile ranges, and the solid line shows the median.
    (c,\,d)~Within-iteration $R_g$ traces for all walkers at iteration~5 (early stage) and iteration~50 (bias-strengthened stage), illustrating the progressive shift toward more extended conformations as the bias evolves.}
    \label{fig:1uao_exploration}
\end{figure}

\begin{figure}[t]
    \centering
    \includegraphics[width=0.9\linewidth]{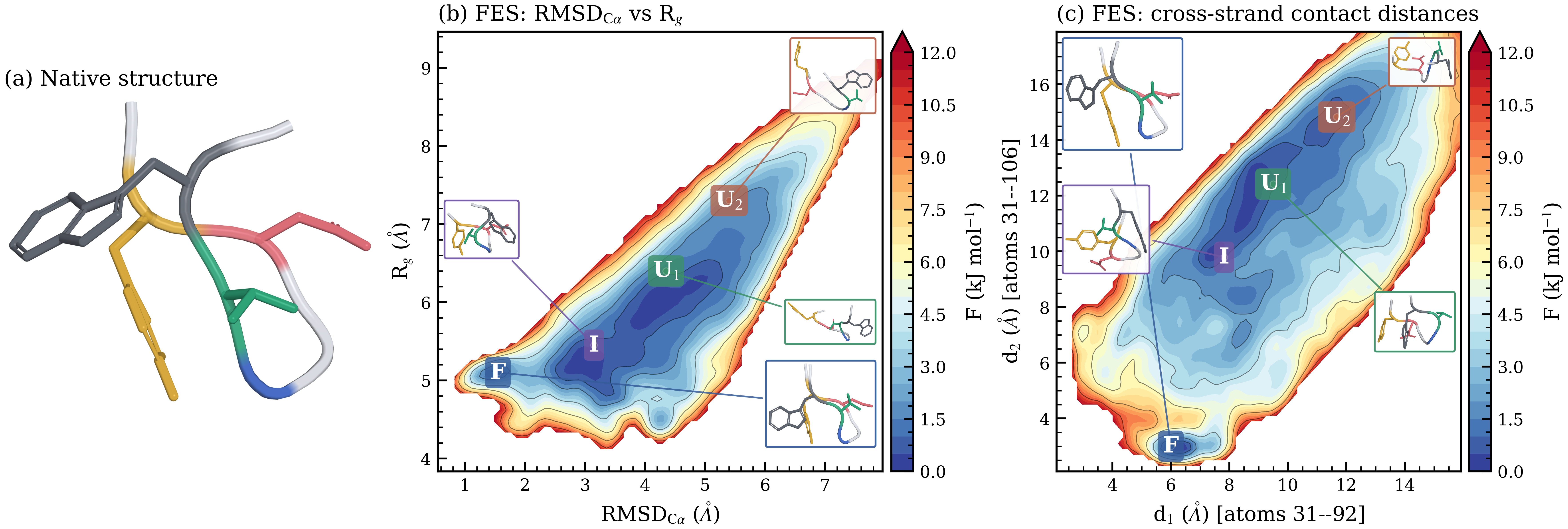}
    \caption{Free energy landscape of Chignolin (1UAO) estimated by the proposed method.
    (a)~Native $\beta$-hairpin structure.
    (b)~Two-dimensional FES projected onto $\mathrm{RMSD}_{C_\alpha}$ vs.\ $R_g$, revealing four metastable states: the folded state (F, low RMSD and compact), an intermediate (I), and two unfolded states ($U_1$, $U_2$) of increasing radius of gyration.
    (c)~FES projected onto the cross-strand contact distances $d_1$ (atoms 31--92) vs.\ $d_2$ (atoms 31--106), providing complementary resolution of the unfolded ensemble.
    Inset snapshots show representative structures from each basin.}
    \label{fig:1uao_fes}
\end{figure}

\subsubsection{Villin headpiece (1VII)}
\label{subsubsec:1vii}

To test the scalability of the method to larger proteins, we consider the villin headpiece subdomain (PDB: 1VII), a 36-residue three-helix bundle that is one of the smallest autonomously folding proteins and a longstanding benchmark for folding simulations.
Compared with the 10-residue Chignolin, this system poses a considerably greater challenge: the folding landscape involves the cooperative packing of three helices, resulting in a higher-dimensional CV space and a richer set of partially folded intermediates.
The molecule is solvated in $4395$ TIP3P water molecules with $13$ Na$^+$ and $15$ Cl$^-$ counterions to neutralize the system.
MD simulations are performed in the NVT ensemble at $T = 300$\,K using the Amber99-SB force field~\cite{hornak2006comparison} with a time step of $\Delta t = 0.0001$\,ps; all other simulation parameters follow the Ala2 setup.
The backbone dihedral angles are used as collective variables, defining a 64-dimensional CV space.
The FHT density estimate uses a Gaussian basis with $p = 21$ basis functions per coordinate, width $\sigma = 0.1$, and maximal internal bond dimension $r = 15$.
The Softplus regularization parameters are set to $\epsilon = 1$ and $\tau = 10$, with bias strength $\alpha = 2$.

Figure~\ref{fig:1vii_fes} shows two-dimensional FESs reconstructed from the fixed-bias production trajectories using MBAR~\cite{shirts2008statistically}.
Panel~(a) displays the native three-helix bundle structure.
In the $(R_g, d_{\mathrm{e2e}})$ projection (panel~b), the FES reveals a compact folded basin~$F$ at small $R_g$ and short end-to-end distance, an intermediate basin~$I$, and two unfolded states $U_1$ and $U_2$ characterized by progressively larger end-to-end distances and radii of gyration.
The $(\mathrm{RMSD}_{C_\alpha}, d_{\mathrm{e2e}})$ projection (panel~c) provides a complementary view that clearly separates the folded and intermediate basins, which overlap in $R_g$ but are distinguished by their backbone RMSD.
Inset structures extracted from each basin confirm the structural assignments.
The clear resolution of four distinct metastable states in a 64-dimensional CV space demonstrates that the proposed method scales effectively to realistic helical proteins with complex folding landscapes.

\begin{figure}[t]
    \centering
    \includegraphics[width=0.9\linewidth]{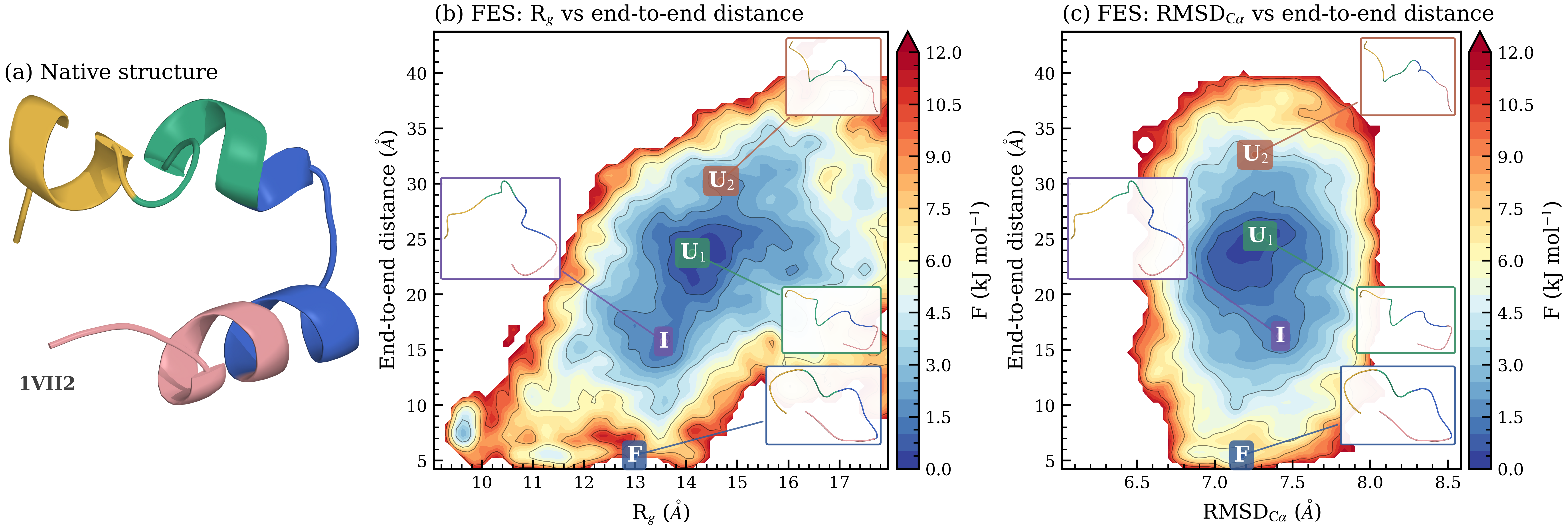}
    \caption{Free energy landscape of the villin headpiece subdomain (1VII) with 64 CVs.
    (a)~Native three-helix bundle structure.
    (b)~Two-dimensional FES projected onto $R_g$ vs.\ end-to-end distance, revealing four metastable states: the folded state (F), an intermediate (I), and two unfolded states ($U_1$, $U_2$) with progressively larger end-to-end distances.
    (c)~FES projected onto $\mathrm{RMSD}_{C_\alpha}$ vs.\ end-to-end distance, providing complementary resolution of the folded and intermediate basins.
    Inset snapshots show representative structures from each basin.}
    \label{fig:1vii_fes}
\end{figure}

\FloatBarrier
\section{Discussion}
\label{sec:discussion}

In this work, we propose a scalable adaptive enhanced-sampling framework for systems with moderate- to high-dimensional CV spaces. 
The central idea is to reformulate adaptive biasing at the level of the stochastic dynamics, so that the biasing drift is defined through a regularized path-dependent CV marginal rather than an instantaneous mean-field law. 
The formulation is motivated by the Wasserstein gradient flow interpretation \cite{lelievre2025convergence} of adaptive biasing, but introduces two modifications that are important for efficient numerical implementation. 
First, instead of regularizing the free-energy functional, we directly regularize the CV marginal entering the biasing drift. 
This modification sacrifices the exact Wasserstein gradient-flow structure, but yields a smooth and well-defined score field and avoids the computationally expensive outer convolution over the high-dimensional CV space. 
Second, we replace the instantaneous law with a weighted history measure accumulated along the trajectories.
This modification mitigates the sampling noise associated with the small-replica regime and is particularly useful for real applications like molecular simulations with a limited number of independent replicas.  The resulting dynamics leads to a path-distribution-dependent SDE.  We establish well-posedness of the regularized and path-dependent stochastic dynamics, and relate the long-time behavior to the invariant measure of the corresponding instantaneous-law dynamics under suitable dissipativity and moment assumptions. The corresponding CV marginal is approximated from accumulated trajectory samples using an optimization-free FHT approximation, which provides an efficient biasing scheme particularly suited for high-dimensional CV spaces.

The numerical experiments demonstrate that the proposed formulation can construct effective adaptive biases in CV spaces substantially beyond the dimensionality typically addressed by enhanced sampling methods for MD systems. Across benchmark problems ranging from low-dimensional potentials to complex protein systems, the adaptive dynamics facilitates the exploration of metastable regions and improves barrier crossing compared with unbiased simulations. 
In particular, the method is applied to molecular systems with CV dimensions up to \(64\), with reweighted low-dimensional FES projections resolving folded, intermediate, and unfolded basins in the protein examples. 
These results illustrate the scalability of the proposed density-based biasing strategy.
We emphasize, however, that the FHT density approximation used during the adaptive stage is not intended to serve as a final unbiased FES estimator. 
Rather, it provides a smooth operational surrogate of the history-averaged CV marginal for constructing the biasing drift. 
Unbiased free-energy information is subsequently recovered from fixed-bias production samples by reweighting, or by MBAR when samples from multiple bias stages are combined.

A closely related recent development is tensor-compressed metadynamics, named TT-metadynamics~\cite{strand2026adaptive}. This method remains within the metadynamics framework; Gaussian hills are deposited along the trajectory, and the accumulated bias potential is periodically compressed into a tensor-train representation. In this sense, the tensor representation primarily serves as a compression method of the accumulated Gaussian hills. In contrast, the present method does not deposit hills. Rather, it directly estimates the CV marginal density from accumulated trajectory data and derives the biasing drift from a regularized score field within a path-dependent McKean--Vlasov formulation. Thus, the low-rank tensor representation serves as the density estimator to close the stochastic dynamics rather than a compression device of an accumulated Gaussian hills. The resulting dynamics is better viewed as a path-dependent regularized score-driven dynamics rather than a tensorized version of metadynamics.

For future work, we note that the present regularized formulation sacrifices the exact Wasserstein gradient-flow structure, and therefore, the variational convergence theory available for the idealized adaptive-biasing dynamics does not directly apply. 
Although existing results for self-interacting McKean--Vlasov processes provide theoretical support for the use of history measures under dissipativity assumptions, a convergence theory that covers realistic molecular potentials and finite-replica simulations remains open. 
On the numerical side, the accuracy and stability of the tensor-based density estimator further depend on the effective low-rank structure of the CV marginal, the choice of basis functions, and the regularization parameters. This calls for further investigations on the adaptive selection of history weights and the tensor ranks, as well as non-negative representation (e.g., see \cite{Tang_Dwaraknath_arxiv_2025}). Also, it would be valuable to integrate the present enhanced sampling method with data-driven learning of CVs for complex systems.

\section*{Acknowledgments}
The work is supported in part by the
National Science Foundation under Grant DMS-2110981 and the
ACCESS program through allocation MTH210005.

\section*{Code availability}
A reference implementation of the proposed method, together with scripts to reproduce the numerical experiments reported in this paper, is openly available at \url{https://github.com/Lyuliyao/fht-sampler}.


\appendix

\section{Proof of Theorem~\ref{thm:path_wellposed}}\label{app:proof_wellposed}

\begin{proof}
Set $\bX_t^0=\bx$ for all $t\in [0,T]$. We define $\bX^{k+1}_t$ inductively from $\bX^{k}_t$ by the following equation:
\begin{equation}\label{equ:picard}
   \bX_t^{k+1} = \bx  + \int_0^t \bb(\bX_s^{k+1},\mu_{s}^{k,q})\intd s  + \sigma \int_0^t \intd \bW_s.
\end{equation}
At step $k+1$, we treat $\mu_{s}^{k,q}$ as a known $\mathcal P_2$-valued process. Then, by Lemma~\ref{lem:lip_cont},
\[
\|\bb(\bX_1,\mu_{s}^{k,q})-\bb(\bX_2,\mu_{s}^{k,q})\|\leq C \|\bX_1-\bX_2\|, \qquad
\|\bb(\bX,\mu_{s}^{k,q})\|\leq C(1+\|\bX\|).
\]
By Theorem~3.1.1 in~\cite{prevot2007concise}, there is a unique strong solution $\bX^{k+1}_t$ to~\eqref{equ:picard} with
\[
\mathbb E\left[\sup_{0<t<T}\left\|\bX^{k+1}_t\right\|^2\right]< \infty,
\]
which implies $\mu_{s}^{k+1,q}\in \mathcal P_2(\mathbb R^{d})$. By induction, every Picard iterate is well defined.

\paragraph{Step 1: Cauchy estimate.}
We show that $\{\bX^k_t\}$ is a Cauchy sequence. By It\^o's formula,
\[
\intd \left\|\bX_t^{k+1}-\bX_t^{k}\right\|^2 = 2\langle \bX_t^{k+1}-\bX_t^{k} , \bb(\bX_t^{k+1},\mu_{t}^{k,q})-\bb(\bX_t^{k},\mu_{t}^{k-1,q})\rangle\intd t.
\]
Splitting the right-hand side and applying the Lipschitz bound~\eqref{equ:lip}, we obtain
\[
\begin{aligned}
    \left\|\bX_t^{k+1}-\bX_t^{k}\right\|^2
    &\leq 2\int_0^t \left| \langle \bX_s^{k+1}-\bX_s^{k} , \bb(\bX_s^{k+1},\mu_{s}^{k,q})-\bb(\bX_s^{k},\mu_{s}^{k,q}) \rangle \right| \intd s\\
    &\quad + 2\int_0^t \left| \langle \bX_s^{k+1}-\bX_s^{k} , \bb(\bX_s^{k},\mu_{s}^{k,q})-\bb(\bX_s^{k},\mu_{s}^{k-1,q}) \rangle\right| \intd s\\
    &\leq C \int_0^t\left( \left\|\bX_s^{k+1}-\bX_s^{k}\right\|^2
    +  W_2(\mu_{s}^{k,q},\mu_{s}^{k-1,q})^2\right)\intd s.
\end{aligned}
\]
For the $W_2$ distance of the history measures, we have 
\[
W_2(\mu_s^{k,q},\mu_s^{k-1,q})^2
\le
\int_0^1 \left\|\bX_{sr}^k-\bX_{sr}^{k-1}\right\|^2 q(\intd r)
\le
\sup_{u\le s}\left\|\bX_u^k-\bX_u^{k-1}\right\|^2.
\]
Defining $\Delta_t^{k} = \sup_{s\leq t} \left\|\bX_s^{k}-\bX_s^{k-1}\right\|^2$ and combining, we get
\[
    \Delta_t^{k+1} \leq C \int_0^t\left( \Delta_s^{k+1}
    +  \Delta_s^{k} \right)\intd s.
\]
Gronwall's inequality yields
\[
\Delta_t^{k+1}\leq C\exp(Ct)\int_0^t \Delta_s^{k}\intd s,
\]
so that for any fixed $T>0$,
\begin{equation}\label{equ:fraction}
\mathbb E \left[\Delta_T^{k+1} \right] \leq  \frac{C_T^k}{k!}\,\mathbb E \left[\Delta_T^{1} \right] \to 0 \quad \text{as } k\to\infty.
\end{equation}

\paragraph{Step 2: Convergence.}
By Markov's inequality and~\eqref{equ:fraction},
\[
\begin{aligned}
 \mathbb P \left( \sup_{0\leq t\leq T} \|\bX_t^{k_1}-\bX_t^{k_2}\|>\epsilon \right)
 &\leq \frac{1}{\epsilon}\sum_{j=k_1}^{k_2-1}\mathbb E\!\left[\sup_{0\leq t\leq T} \|\bX_t^{j+1}-\bX_t^{j}\| \right]
 \leq  \frac{1}{\epsilon}\sum_{j=k_1}^{k_2-1}\sqrt{\mathbb E \left[\Delta_T^{j+1} \right]},
\end{aligned}
\]
which tends to zero as $k_1,k_2\to\infty$.
Hence $\{\bX^k_t\}$ is Cauchy in probability and converges in $L^2(\Omega;C([0,T];\mathbb R^{d}))$ to a limit $\bX_t$.
Along a subsequence, $\sup_{0<t<T}\|\bX^{k_i}_t-\bX_t| \to 0$ almost surely.
Since
\[
W_2(\mu_s^{k_i,q},\mu_s^{q})^2
\le
\sup_{0\le u\le s}\left\|\bX_u^{k_i}-\bX_u\right\|^2 \to 0,
\]
the continuity of $\bb$ and the linear growth bound $\|\bb(\bX_s^{k+1},\mu_s^{k,q})\|\leq C(1+\sup_{u\leq T}\|\bX_u^{k+1}\|)$, together with the dominated convergence theorem, imply
\[
\lim_{k\to\infty} \int_0^t \bb(\bX_s^{k+1},\mu_s^{k,q}) \intd s  = \int_0^t \bb(\bX_s ,\mu_s^{q} ) \intd s \quad \text{a.s.}
\]
Therefore, for any $t\in[0,T]$,
\[
\bX_t = \bx + \int_0^t \bb(\bX_s,\mu_s^q) \intd s + \sigma \int_0^t \intd \bW_s,
\]
which establishes existence.

\paragraph{Step 3: Uniqueness.}
If $\bX^1$, $\bX^2$ both satisfy~\eqref{equ:path_dependent_sde}, then
$W_2(\mu_{t}^{q,1},\mu_{t}^{q,2})^2\leq \sup_{u\le t} \|\bX^1_{u}-\bX^2_{u}\|^2$,
so that
\[
\sup_{r\le t} \|\bX^1_{r}-\bX^2_{r}\|^2\leq C \int_0^t \sup_{u\le s} \|\bX^1_{u}-\bX^2_{u}\|^2 \intd s.
\]
Gronwall's inequality implies $\bX^1_t = \bX^2_t$ for all $t\in [0,T]$ almost surely.
\end{proof}

\end{document}